\documentclass{article}

\usepackage[a4paper, left=2.5cm, right=2.5cm, top=2.5cm, bottom=2.5cm]{geometry}
\usepackage{graphicx}
\usepackage{amsmath,amssymb}
\usepackage{amsthm}

\allowdisplaybreaks

\theoremstyle{plain}
\newtheorem{theorem}{Theorem}[section]
\newtheorem{proposition}[theorem]{Proposition}
\newtheorem{lemma}[theorem]{Lemma}
\newtheorem{corollary}[theorem]{Corollary}

\theoremstyle{definition}

\newtheorem{example}[theorem]{Example}

\theoremstyle{remark}
\newtheorem{remark}[theorem]{Remark}

\newcommand{\address}[1]{\par\small\textit{#1}\par}
\newcommand{\email}[1]{\par\small\texttt{#1}\par}

\title{Hypersurfaces with Non-diagonalizable Shape Operators in Minkowski Spaces}

\author{%
  Z\"ulal Derin Yaqub\thanks{Correspondence author, email: derinyaqub@itu.edu.tr}\\
  \address{Department of Mathematics, Istanbul Technical University, Istanbul 34000, T\"urkiye}
  \email{derinyaqub@itu.edu.tr}
  \and
  Nurettin Cenk Turgay\\
  \address{Department of Mathematics, Istanbul Technical University, Istanbul 34000, T\"urkiye}
  \email{turgayn@itu.edu.tr}
}

\date{}

\begin{document}

\maketitle

\begin{abstract}
	In this paper, we study hypersurfaces $M^4$ in the Minkowski space $\mathbb{E}_1^5$ whose shape operator $S$ is non-diagonalizable, having a triple real eigenvalue $\lambda$ with a one-dimensional eigenspace together with a simple eigenvalue $\mu$ (Case (iii) in the classification of Magid). Assuming that $\lambda$ and $\mu$ satisfy a functional relation $\lambda = f(\mu)$, we obtain a class of hypersurfaces which can be regarded as a common generalization of both Weingarten surfaces and rotational hypersurfaces. We first prove that $\lambda$ and $\mu$ are intrinsic invariants of $M$, and then establish the existence and uniqueness of isometric immersions of this type into $\mathbb{E}_1^5$.

	\noindent
\textbf{Keywords:} Minkowski space, Lorentzian hypersurface, non-diagonalizable shape operator, Weingarten hypersurface, isometric immersion, existence and uniqueness.

\noindent
\textbf{2020 Mathematics Subject Classification:} Primary 53C42, 53C50; Secondary 53B25, 53B30.

\end{abstract}

\section{Introduction}

The study of surfaces and hypersurfaces in space forms whose principal curvature invariants satisfy a prescribed condition is one of the classical themes of differential geometry. In particular, a surface $M$ is said to be a \emph{Weingarten surface} if its principal curvatures $\lambda,\mu$ satisfy a functional relation of the form
$$W(\lambda,\mu)=0$$
for a function $W$. Equivalently, if $K$ and $H$ denote the Gaussian curvature and the mean curvature of $M$, respectively, this relation can be written as $U(K,H)=0$.

Such surfaces were first introduced by Weingarten \cite{Weingarten1861} in his study of surfaces that can be isometrically deformed onto one another. Since then they have been studied extensively in various ambient spaces. For example, Sa Earp and Toubiana \cite{SaEarpToubiana1999} established symmetry results, in the spirit of the Alexandrov reflection principle, for properly embedded special Weingarten surfaces in the hyperbolic space $\mathbb{H}^3$. López  obtained classification results for \emph{linear} Weingarten surfaces, that is, surfaces satisfying a relation of the form $aK+bH=c$, in both Euclidean and hyperbolic space, describing rotational examples, surfaces foliated by circles, and surfaces invariant under parabolic isometry groups in \cite{Lopez2008a,Lopez2008b}. More recently, Dursun \cite{Dursun2020} classified rotational Weingarten surfaces in $\mathbb{H}^3(-1)$ whose principal curvatures satisfy $\kappa = F(\lambda)$, determining the profile curves explicitly for several choices of $F$, including the linear and power-law cases.

On the other hand, another classical family of hypersurfaces that has attracted considerable attention is that of \emph{rotation} (or \emph{rotational}) \emph{hypersurfaces}. A hypersurface $M$ in a space form, or in a Riemannian product $S^n\times\mathbb{R}$ or $H^n\times\mathbb{R}$, is called a rotation hypersurface if it is invariant under a subgroup of isometries of the ambient space fixing a totally geodesic submanifold (the rotation axis), so that $M$ is generated by rotating a profile curve about this axis.

Do Carmo and Dajczer \cite{DoCarmoDajczer1983} showed that such a hypersurface has shape operator of the form $S = \mathrm{diag}(\lambda,\mu,\mu,\dots,\mu)$
where the principal curvature $\lambda$, associated with the unit tangent $e_1$ of the profile curve, satisfies a functional relation $\lambda = f(\mu)$, together with the condition $\nabla_{e_1}e_1=0$. Conversely, these two conditions -- namely, that the shape operator takes the above diagonal form with $\lambda=f(\mu)$, and that $\nabla_{e_1}e_1=0$ -- characterize rotation hypersurfaces in the space forms $S^n$, $H^n$, and $E^n$; this characterization was later extended to the product spaces $S^n\times\mathbb{R}$ and $H^n\times\mathbb{R}$ by Dillen, Fastenakels, and Van der Veken \cite{Dillen2009}.

The interplay between the rotational and Weingarten conditions has also been studied directly: Rosenberg and Sa Earp \cite{RosenbergSaEarp1994} investigated the geometry of properly embedded surfaces in $\mathbb{R}^3$ satisfying the special Weingarten relation $aH+bK=1$, many of which arise as rotation surfaces, while Yoon \cite{Yoon2005} classified Weingarten surfaces of revolution in the hyperbolic space $\mathbb{H}^3$.

In Lorentzian geometry, the indefinite metric allows the shape operator to fail to be diagonalizable. In this case, new geometric behavior does not have a counterpart in the Riemannian setting. The possible Jordan forms of such operators were classified by Magid \cite{Magid1985}, and hypersurfaces with non-diagonalizable shape operators have subsequently been investigated by several authors, see e.g.\ \cite{LiangXia2019}.

In this paper, we study hypersurfaces $M^4$ of the Minkowski space $\mathbb{E}_1^5$ whose shape operator $S$ has the non-diagonalizable form 
   \begin{equation} \label{eq:shape}
        S \sim
        \begin{pmatrix}
            \mu & 0 & 0 & 0 \\
            0 & \lambda & 0 & 1 \\
            0 & 0 & \lambda & 0 \\
            0 & 0 & -1 & \lambda
        \end{pmatrix}
    \end{equation}
for some smooth functions $\mu$ and $\lambda$ with respect to a suitable pseudo-orthonormal frame field (See Case (iii) of Lemma~\ref{lem:shape-classification}). Motivated by the classical notions discussed above, we impose a functional relation $\lambda = f(\mu)$ between the two principle curvatures of the hypersurfaces. We first show that, under this assumption, $\lambda$ and $\mu$ are intrinsic invariants of $M$, depending only on the induced metric. In particular, we establish the existence and uniqueness of isometric immersions of this type into $\mathbb{E}_1^5$. By this assumption, this class of hypersurfaces with non-diagonalizable shape operator may be viewed as a common generalization of Weingarten surfaces and rotational hypersurfaces in the Minkowski space.

The structure of the paper is organized as follows. In Section 2, we recall basic definitions and preliminaries. In Section 3, we derive curvature relations and obtain the fundamental equations. Section 4 is devoted to the main results, where we analyze the geometric structure and derive the governing system. Finally, we discuss existence and uniqueness of such hypersurfaces.

\section{Preliminaries}
	Let $\mathbb{E}_1^5$ denote the $5$-dimensional Minkowski space equipped with the metric tensor
$$	\bar{g} = \langle \cdot, \cdot \rangle = -dx_1^2  + \sum_{i=2}^5 dx_i^2,$$
	where $(x_1, x_2 \dots, x_5)$ is a Cartesian coordinate system with the Levi-Civita connection $\bar{\nabla}$. 
		
	\subsection{Basic Definitions and Facts on Hypersurfaces}	
		
		Consider a $4$-dimensional, oriented Lorentzian hypersurface $M$ immersed in $\mathbb{E}_1^5$ with the unit normal vector field $\xi$ and the Levi-Civita connections $\nabla$. Then, 	for any vector fields $X, Y$ the Gauss and Weingarten formul\ae, given by
\begin{eqnarray*}
	\bar{\nabla}_X Y &=& \nabla_X Y + h(X, Y),\\
	\bar{\nabla}_X N &=& -S X,
\end{eqnarray*}
define the second fundamental form $h$ and the shape operator $S$  of $M$. Note that $t$ and $h$  are related by $\langle h(X,Y), N \rangle = g(SX, Y)$, where $g = x^*\bar{g}$ denotes the induced metric on $M$. 

When $M$ is Lorentzian, the canonical algebraic classification for $S$ is well-known.
\begin{lemma}[\cite{Magid1985}]\label{lem:shape-classification}
	Let $M$ be a $4$-dimensional Lorentzian  manifold and $S$ be a symmetric endomorphism on $T_pM$. Then, $S$ takes one of the following canonical forms.
	
	\medskip
	\noindent
	\textbf{Case (i).} $S $ is diagonalizable.
	
	\medskip
	\noindent
	\textbf{Case (ii).} $\{e_1,e_2,e_3,e_4\}$ is a pseudo-orthonormal frame satisfying
	$$
	\langle e_1,e_2\rangle=-1, \qquad \langle e_3,e_3\rangle=\langle e_4,e_4\rangle=1,
	$$
	with all other products vanishing, and
	$$
	S \sim
	\begin{pmatrix}
		\lambda & 1 & 0 & 0 \\
		0& \lambda & 0 & 0 \\
		0 & 0 & \mu_1 & 0 \\
		0 & 0 & 0 & \mu_2
	\end{pmatrix}, \qquad \lambda,\mu_1,\mu_2\in\mathbb{R}.
	$$
	
	\medskip
	\noindent
	\textbf{Case (iii).} $\{e_1,e_2,e_3,e_4\}$ is a pseudo-orthonormal frame satisfying
	\begin{equation}\label{eq:shape0}
	\langle e_2, e_3 \rangle = -1, \qquad \langle e_1, e_1 \rangle = \langle e_4, e_4 \rangle = 1,
	\end{equation}
	with all other products vanishing, and $S$ satisfies \eqref{eq:shape}.

	\medskip
	\noindent
	\textbf{Case (iv).} $\{e_1,e_2,e_3,e_4\}$ is an orthonormal frame satisfying
	$$
	\langle e_1,e_1\rangle=-1, \qquad \langle e_2,e_2\rangle=\langle e_3,e_3\rangle=\langle e_4,e_4\rangle=1,
	$$
	and
	$$
	S \sim
	\begin{pmatrix}
\lambda & \nu & 0& 0 \\
-\nu & \lambda & 0& 0 \\
0 & 0 & \mu_1 & 0 \\
0 & 0 & 0& \mu_2 	\end{pmatrix}, \qquad \mu_1,\mu_2,\lambda,\nu \in\mathbb{R}, \ \nu \neq 0.$$
\end{lemma}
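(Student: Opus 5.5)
We prove the classification by working in a single tangent space $V=T_pM$, which carries a Lorentzian inner product $g$, together with the $g$-self-adjoint operator $S$. We split according to the roots of the characteristic polynomial of $S$ and, for real roots, according to the Jordan structure.

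\emph{Complex eigenvalues.} Suppose $S$ has a non-real eigenvalue $\lambda+i\nu$ with $\nu\neq0$. Take a complex eigenvector $Z=X+iY$, so $SZ=(\lambda+i\nu)Z$. Self-adjointness gives $g(SZ,Z)=g(Z,SZ)$ for the complex bilinear extension of $g$, which is automatic, and $g(SZ,\bar Z)=g(Z,S\bar Z)$. The second identity yields $(\lambda+i\nu)g(Z,\bar Z)=(\lambda-i\nu)g(Z,\bar Z)$, hence $g(Z,\bar Z)=0$, that is, $g(X,X)+g(Y,Y)=0$.

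Comparing $g(SZ,Z)$ computed with $S$ acting on the left and on the right shows that $\mathrm{span}\{X,Y\}$ is a non-degenerate invariant plane. A plane on which a positive definite form could live contains no such pair, so this plane must be Lorentzian. Choose a $g$-orthonormal basis $e_1,e_2$ of it. In this basis $S$ has the block $\begin{pmatrix}\lambda&\nu\\-\nu&\lambda\end{pmatrix}$.

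The orthogonal complement of the plane is Riemannian and $S$-invariant, so $S$ is diagonalizable there by the spectral theorem. This gives Case (iv). It also shows that at most one complex pair can occur, since only one timelike direction is available.

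\emph{All eigenvalues real.} Let $V_\alpha=\ker (S-\alpha)^4$ be the generalized eigenspaces. Self-adjointness makes distinct $V_\alpha$ mutually $g$-orthogonal, so $V$ is their orthogonal direct sum and each $V_\alpha$ is non-degenerate. Exactly one of them is Lorentzian; the others are Riemannian, and on those $S$ is diagonal.

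On the Lorentzian block $W$, set $N=S-\alpha$, which is nilpotent and self-adjoint. If $N\neq0$, the bilinear form $(u,v)\mapsto g(N^{k}u,v)$ must be compatible with the signature. From this we will derive that:
\begin{itemize}
\item the image of $N$ is totally null, hence of dimension at most $1$ when $N^2=0$;
\item when $N^2\neq0$, the chain $u,Nu,N^2u$ has $N^2u$ null and $g(Nu,Nu)=g(u,N^2u)$;
\item $N^3=0$, since a chain of length $4$ would force a null subspace of dimension $2$.
\end{itemize}

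\emph{Normalizing the Jordan chains.} For $N^2=0$, $N\neq 0$, pick $u$ with $g(Nu,u)=\epsilon\neq0$. Rescale and adjust $u$ by a multiple of $Nu$ so that $u$ is null, and set $e_2=Nu$ and $e_1$ proportional to $u$ with $g(e_1,e_2)=-1$. The sign of $\epsilon$ can be absorbed by swapping or negating vectors, and the orthogonal complement is Riemannian. This produces Case (ii).

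For $N^3=0$, $N^2\neq0$, pick $u$ with $g(N^2u,u)\neq0$. Modify $u$ by combinations of $Nu$ and $N^2u$ so that $u$ is null and orthogonal to $Nu$. Rescale so that $e_2:=N^2u$, $e_3:=u$ and $e_4:=Nu$ satisfy $g(e_2,e_3)=-1$ and $g(e_4,e_4)=1$. Choose $e_1$ as the unit spacelike complement, on which $S=\mu$. Up to the sign and ordering conventions of the chain, this yields the matrix \eqref{eq:shape} with pseudo-orthonormal frame \eqref{eq:shape0}.

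\emph{Expected main obstacle.} The hardest step is this last normalization. The adjustments of $u$ must preserve the chain relations, and the signs in \eqref{eq:shape} must be matched exactly. We will handle it by solving the resulting quadratic conditions on the coefficients explicitly. We will also use the freedom $e_2\mapsto c e_2$, $e_3\mapsto c^{-1}e_3$ to normalize the off-diagonal entries to $\pm1$.
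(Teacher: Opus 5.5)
The paper gives no proof of this lemma; it is quoted from Magid, so there is no route to compare against. Your outline is the standard one and is sound in structure: complexification for non-real eigenvalues, the orthogonal splitting into generalized eigenspaces with exactly one Lorentzian summand, and total nullity of $\operatorname{Im}N$ (resp.\ $\operatorname{Im}N^2$) for the self-adjoint nilpotent part.

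\textbf{The genuine error is in Case (ii): the sign of $\epsilon$ cannot be absorbed.} On the $2$-dimensional block, the form $v\mapsto g((S-\lambda)v,v)$ equals $\epsilon a^2$ for $v=au+bNu$. It is therefore semidefinite, and its sign does not depend on the basis. With $\langle e_1,e_2\rangle=-1$ fixed, the available moves do not flip it. Rescaling $e_1\mapsto te_1$, $e_2\mapsto t^{-1}e_2$ multiplies the off-diagonal entry by $t^{-2}>0$. Negating both vectors changes nothing. Swapping them only produces a lower-triangular block.

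Here is a concrete counterexample. In a frame as in Case (ii), set $Se_1=0$, $Se_2=-e_1$, $Se_3=Se_4=0$. This $S$ is symmetric and non-diagonalizable, has real spectrum, and satisfies $S^2=0$, which rules out Case (iii). So it could only be Case (ii) with $\lambda=\mu_1=\mu_2=0$. But $g(Sv,v)=b^2\ge0$ for $v=ae_1+be_2+ce_3+de_4$, whereas every such Case (ii) operator has $g(Sv,v)=-b^2$ in its own frame.

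So Case (ii) as printed holds only with entry $\pm1$, or up to $S\mapsto-S$. For a shape operator that means reversing the unit normal, which is presumably how Magid's form is meant. The paper only uses Case (iii), so nothing downstream breaks. Your proof, however, must either prove the $\pm1$ version or state this caveat; it cannot claim the sign disappears.

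Case (iii) has no such obstruction, and explaining why also repairs your normalization, which as written is inconsistent. For any $u\notin\ker N^2$, $Nu$ is orthogonal to the nonzero null vector $N^2u$ and independent of it, hence spacelike. So $g(N^2u,u)=g(Nu,Nu)>0$. Your choice $e_2=N^2u$, $e_3=u$, $e_4=Nu$ can therefore never give $g(e_2,e_3)=-1$ together with $g(e_4,e_4)=1$, since the two numbers are equal. The fix is as follows:
\begin{itemize}
\item Replace $u$ by $u+aNu+bN^2u$; orthogonality to $Nu$ is linear in $a$, and nullity is then linear in $b$.
\item Scale by a positive factor so that $g(N^2u,u)=1$.
\item Set $e_2=-N^2u$, $e_3=u$, $e_4=-Nu$.
\end{itemize}
Then $Se_2=\lambda e_2$, $Se_3=\lambda e_3-e_4$ and $Se_4=e_2+\lambda e_4$, which is exactly \eqref{eq:shape}. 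Finally, $e_1$ spans the spacelike invariant complement, possibly with $\mu=\lambda$.

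Two smaller repairs are needed in the complex case.
\begin{itemize}
\item Comparing $g(SZ,Z)$ ``from the left and the right'' is a tautology and proves nothing. Non-degeneracy follows instead from $g(X,X)+g(Y,Y)=0$. A degenerate plane in a Lorentzian space is positive semidefinite with a one-dimensional radical, and that radical would then have to contain both $X$ and $Y$. The same identity also excludes a Riemannian plane.
\item Not every orthonormal basis of the plane gives the block, because a boost changes the two diagonal entries differently. Replace $Z$ by $e^{i\theta}Z$ so that $g(Z,Z)\in\mathbb R$; this yields $g(X,Y)=0$ and $g(X,X)=-g(Y,Y)\neq0$. Pass to $iZ$ if needed so that $X$ is timelike, and normalize. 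This gives $SX=\lambda X-\nu Y$ and $SY=\nu X+\lambda Y$.
\end{itemize}
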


\begin{remark}\label{rem:assumptions}
\textbf{Assumptions.} Throughout this paper, we are going to assume that
\begin{enumerate}
    \item[(A1)] $M$ is a simply connected, smooth, $4$-dimensional Lorentzian hypersurface of $\mathbb{E}^5_1$ and the vector fields $e_1,e_2,e_3,e_4$ are smooth on $M$
    \item[(A2)] the shape operator $S$ of $M$ has the form given in case (iii) of Lemma~\ref{lem:shape-classification} for some smooth functions $\lambda,\mu$ such that $\lambda-\mu$ vanishes nowhere on $M$,
		\item[(A3)] $\lambda$ and $\mu$ satisfy
\begin{eqnarray}\label{lambndanufunctrelation01}
\lambda=f(\mu)
\end{eqnarray}
for a smooth function $f$.
    \item[(A4)] $e_1$ is a geodesic field, that is $\nabla_{e_1}e_1=0$.
\end{enumerate}
\end{remark}

We are going to use the following well-known theorem (See, for example, \cite{Dajczer1990} for Riemannian space-forms).
\begin{theorem}[Fundamental Theorem of Hypersurfaces \cite{ONeill1983}]\label{thm:FTH}
Let $N^{n+1}_{\nu}(c)$ denote the semi-Riemannian space form with constant sectional curvature $c$, let $\varepsilon\in\{-1,1\}$, let $(\Omega,g)$ be an $n$-dimensional simply connected semi-Riemannian manifold, and let $S$ be a field of $g$-symmetric linear operators on $\Omega$. Suppose that $S$ satisfies the Gauss equation
$$
R(X,Y)Z=\varepsilon\bigl(g(SY,Z)SX-g(SX,Z)SY\bigr)+c\bigl(g(Y,Z)X-g(X,Z)Y\bigr)
$$
and the Codazzi equation
$$
(\nabla_XS)Y=(\nabla_YS)X
$$
for all $X,Y,Z\in T\Omega$. Then,
\begin{itemize}
    \item \textbf{(Existence)} There exists an isometric immersion $\psi:(\Omega,g)\hookrightarrow N^{n+1}_{\nu}(c)$ such that $S$ is the shape operator of $\psi$ with respect to a unit normal field $\xi$ satisfying $\langle\xi,\xi\rangle=\varepsilon$.
    \item \textbf{(Uniqueness)} If $\psi,\tilde\psi:(\Omega,g)\hookrightarrow N^{n+1}_{\nu}(c)$ are two isometric immersions such that $S$ is the shape operator of both $\psi$ and $\tilde\psi$ with respect to unit normal fields $\xi$ and $\tilde\xi$ satisfying $\langle\xi,\xi\rangle=\langle\tilde\xi,\tilde\xi\rangle=\varepsilon$, then there exists an isometry $\tau:N^{n+1}_{\nu}(c)\to N^{n+1}_{\nu}(c)$ such that $\tilde\psi=\tau\circ\psi$.
\end{itemize}
\end{theorem}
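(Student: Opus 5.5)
The statement is the Fundamental Theorem of Hypersurfaces (Theorem~\ref{thm:FTH}), which the paper quotes from \cite{ONeill1983}. The plan is the classical moving-frame argument, with signs kept for the semi-Riemannian setting.

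\textbf{Step 1: the frame equation.} Realize $N^{n+1}_\nu(c)$ as a quadric in a flat pseudo-Euclidean space $\mathbb{R}^{n+2}_{\nu'}$, or as $\mathbb{R}^{n+1}_\nu$ itself when $c=0$. Then look for a map $\psi:\Omega\to\mathbb{R}^{n+2}_{\nu'}$ with tangent frame $d\psi(e_i)$ and normal $\xi$. On the trivial bundle $E=T\Omega\oplus\mathbb{R}\xi\oplus\mathbb{R}\psi$ define the connection
$$\tilde\nabla_X(Y+a\xi+b\psi)=\nabla_XY+a(-SX)+bX+\bigl(X(a)+\varepsilon g(SX,Y)\bigr)\xi+\bigl(X(b)-c\,g(X,Y)\bigr)\psi .$$
Equip $E$ with the bundle metric $g\oplus\varepsilon\oplus c^{-1}$, suitably modified if $c=0$ by dropping the $\psi$ summand and working affinely. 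This connection is metric because $S$ is $g$-symmetric.

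\textbf{Step 2: flatness.} Compute the curvature of $\tilde\nabla$ and split it into components. The tangential component is $R(X,Y)Z$ minus the right-hand side of the Gauss equation. The $\xi$-component is $\varepsilon\, g((\nabla_XS)Y-(\nabla_YS)X,Z)$. The $\psi$-components cancel identically. The Gauss and Codazzi hypotheses therefore say exactly that $\tilde\nabla$ is flat.

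\textbf{Step 3: existence.} Since $\Omega$ is simply connected and $\tilde\nabla$ is flat, there is a global parallel frame. It gives a bundle isometry $E\cong\Omega\times\mathbb{R}^{n+2}_{\nu'}$ preserving the metric, hence with the correct signature. Define $\psi$ as the image of the section $\psi$; in the flat case, take $\psi=\int$ of the $1$-form $X\mapsto X$, which is closed because $\nabla$ is torsion-free. Then $d\psi(X)=X$, so $\psi$ is isometric, lands in the quadric, and has $\xi$ as a unit normal of sign $\varepsilon$. The Weingarten formula $\bar\nabla_X\xi=-SX$ holds by construction, so $S$ is the shape operator of $\psi$.

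\textbf{Step 4: uniqueness.} Given $\psi$ and $\tilde\psi$, both induce parallel isomorphisms of $(E,\tilde\nabla)$ with the trivial flat bundle. They differ by a constant element of $O(\nu',n+2-\nu')$, together with a translation when $c=0$, and this element preserves the quadric. Evaluating at a base point shows that this isometry $\tau$ satisfies $\tilde\psi=\tau\circ\psi$, using connectedness.

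The main obstacle is the sign bookkeeping in the curvature computation of Step 2: the factor $\varepsilon$ and the indefinite signature must combine to give exactly the stated Gauss equation. The degenerate case $c=0$, where there is no ambient quadric, also needs separate handling in Steps 1 and 3.
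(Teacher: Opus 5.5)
The paper gives no proof of this statement. It is quoted from \cite{ONeill1983} (with \cite{Dajczer1990} for the Riemannian case), and only its uniqueness part with $c=0$, $\nu=1$, $\varepsilon=1$ is used, in the proof of Theorem~\ref{thm:uniqueness}. Your flat-connection argument is the standard proof of this result, and its overall structure is right. For the case the paper actually needs, your Step~4 alone suffices, and it uses only the connectedness of $\Omega$.

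One point needs correcting: you use the symmetry of $S$ in the wrong place. With your formula, $\tilde\nabla$ is metric for every $S$, because
$$\langle\tilde\nabla_XY,\xi\rangle+\langle Y,\tilde\nabla_X\xi\rangle=\varepsilon^2g(SX,Y)-g(Y,SX)=0,$$
and likewise $\langle\tilde\nabla_XY,\psi\rangle+\langle Y,\tilde\nabla_X\psi\rangle=-c\,c^{-1}g(X,Y)+g(Y,X)=0$. Symmetry is needed for flatness instead. In Step~2 you compute $\tilde R(X,Y)Z$ only for tangent $Z$. By skew-adjointness this fixes every entry of $\tilde R(X,Y)$ except the $(\xi,\psi)$-entry, and that entry is
$$\tilde R(X,Y)\psi=\tilde\nabla_XY-\tilde\nabla_YX-[X,Y]=\varepsilon\bigl(g(SX,Y)-g(X,SY)\bigr)\xi .$$
So when $c\neq0$, Gauss and Codazzi alone do not make $\tilde\nabla$ flat.

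The same issue arises for $c=0$. If $\Phi$ denotes your parallel trivialization, the form $\theta(X)=\Phi(X)$ satisfies
$$d\theta(X,Y)=\Phi\bigl(\tilde\nabla_XY-\tilde\nabla_YX-[X,Y]\bigr)=\varepsilon\bigl(g(SX,Y)-g(X,SY)\bigr)\Phi(\xi).$$
Hence $\theta$ is closed because $\nabla$ is torsion-free \emph{and} $S$ is symmetric, not because of torsion-freeness alone. Since symmetry of $S$ is a hypothesis, both fixes are immediate.

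Two smaller points remain. First, your phrase ``correct signature'' tacitly requires $\nu=\operatorname{ind}(g)+(1-\varepsilon)/2$. Second, if $N^{n+1}_\nu(c)$ means the simply connected model rather than the quadric (for example, the universal cover of anti-de Sitter space), you must lift $\psi$ and $\tau$ through the covering map. This is harmless because $\Omega$ is simply connected.
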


\subsection{Sign conventions}
Let $M$ be a hypersurface of the Minkowski space $\mathbb E^5_1$. We denote  the curvature tensor of $M$ by $R$ given by
$$
R(X,Y)Z=\nabla_X\nabla_Y Z-\nabla_Y\nabla_X Z-\nabla_{[X,Y]}Z.
$$
The Ricci tensor is defined by 
\begin{equation}\label{eq:Ric}
\operatorname{Ric}(X)=\operatorname{tr} R(X,\cdot)\cdot
\end{equation}
and scalar curvature $\operatorname{Scal}$ of $M$ is the trace of $\operatorname{Ric}$, that is,
\begin{equation}\label{eq:Scal}
\operatorname{Scal}=\operatorname{tr}\operatorname{Ric}(\cdot,\cdot),
\end{equation}
where we put
$$\operatorname{Ric}(X,Y)=R(\cdot,X,Y,\cdot).$$

Moreover, the Gauss and Codazzi equations are
\begin{eqnarray}	
\label{eq:gauss}  R(X,Y,Z,W) &=& \langle h(Y,Z), h(X,W) \rangle - \langle h(X,Z), h(Y,W) \rangle,\\
\label{eq:codazzi} (\nabla_X h)(Y,Z) &=& (\nabla_Y h)(X,Z)
\end{eqnarray}
satisfied whenever $X, Y, Z, W $ are tangent to $M$, where the  covariant derivative $(\nabla_X h)$ of $h$ is defined by
$$	(\bar{\nabla}_X h)(Y, Z) = \nabla_X^\perp h(Y, Z) - h(\nabla_X Y, Z) - h(Y, \nabla_X Z).$$

\section{Hypersurfaces with Non-Diagonalizable Shape Operator}

In this section, we study some geometrical properties of hypersurfaces
in $\mathbb{E}_1^5$ with the shape operator given in case (iii) of Lemma \ref{lem:shape-classification}.

Let $M$ be a hypersurface satisfying the assumptions give in Remark \ref{rem:assumptions}. Then, the second fundamental form $h$ of $M$ takes the form
\begin{equation}\label{eq:second-fundamental-form}
	h(e_1,e_1) = \mu,  \quad	h(e_2,e_3) = -\lambda,  \quad	h(e_3,e_4) = -1,\quad 	h(e_4,e_4) = \lambda,
\end{equation}
and $h(e_i,e_j)=0$ for all other pairs $(i,j)$.

Since $\{e_1,e_2,e_3,e_4\}$ satisfies \eqref{eq:shape0}, by using connection $1$-forms $\omega_{ij}$ defined by
$$\omega_{ij}(e_k) = \langle \nabla_{e_k} e_i, e_j \rangle,$$
which are skew-symmetric, i.e., $\omega_{ij} = -\omega_{ji}$, one can observe that $\nabla$ takes the form
\begin{align}\label{eq:nabla11}
\begin{split}
	\nabla_{e_i} e_1 &= -\omega_{13}(e_i)  e_2 - \omega_{12}(e_i)  e_3 + \omega_{14}(e_i)  e_4, \\
	\nabla_{e_i} e_2 &= -\omega_{12}(e_i)  e_1 - \omega_{23}(e_i)  e_2 + \omega_{24}(e_i)  e_4, \\
	\nabla_{e_i} e_3 &= -\omega_{13}(e_i)  e_1 + \omega_{23}(e_i)  e_3 + \omega_{34}(e_i)  e_4, \\
	\nabla_{e_i} e_4 &= -\omega_{14}(e_i)  e_1 + \omega_{34}(e_i)  e_2 + \omega_{24}(e_i)  e_3.
\end{split}
\end{align}

First, we obtain the following lemma.
\begin{lemma}\label{lem:ConnForms}
Let $M$ be a hypersurface satisfying the assumptions given in Remark~\ref{rem:assumptions}. Then the connection forms $\omega_{ij}$ and the principal curvatures $\lambda,\mu$ of $M$ defined by \eqref{eq:nabla11} satisfy
\begin{subequations}\label{eq:All:lem:ConnForms}
\begin{align}
& e_a(\mu) = e_a(\lambda) = 0, \qquad a=2,3,4, \label{eq:muconst}\\
& \omega_{12}(e_1)=\omega_{13}(e_1)=\omega_{14}(e_1)=0, \label{eq:wzero1}\\
& \omega_{12}(e_2)=\omega_{14}(e_2)=\omega_{23}(e_2)=\omega_{24}(e_2)=\omega_{12}(e_4)=\omega_{24}(e_4)=0, \label{eq:wzero2}\\
& \omega_{23}(e_3)=2\omega_{34}(e_4),\quad \omega_{23}(e_4)=4\omega_{34}(e_2),\quad \omega_{24}(e_3)=2\omega_{34}(e_2), \label{eq:wrel1}\\
& \omega_{14}(e_3)=(\lambda-\mu)\omega_{13}(e_3)+2\omega_{34}(e_1),\qquad
  \omega_{14}(e_4)=(\lambda-\mu)\omega_{13}(e_4)+\omega_{23}(e_1), \label{eq:wrel2}\\
& \omega_{12}(e_3)=\omega_{13}(e_2)=-\omega_{23}(e_1)-(\lambda-\mu)\big((\lambda-\mu)\omega_{13}(e_3)+2\omega_{34}(e_1)\big), \label{eq:wrel3}\\
& \omega_{24}(e_1)=\tfrac13(\lambda-\mu)^2\big((\lambda-\mu)\omega_{13}(e_3)-\omega_{13}(e_4)+2\omega_{34}(e_1)\big), \label{eq:wrel4}\\
& e_1(\lambda)=-(\lambda-\mu) \omega_{23}(e_1)-\tfrac13(\lambda-\mu)^2\big(2(\lambda-\mu) \omega_{13}(e_3)+\omega_{13}(e_4)+4 \omega_{34}(e_1)\big). \label{eq:e1lambda}
\end{align}
\end{subequations}
\end{lemma}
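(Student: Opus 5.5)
The plan is to write out the Codazzi equation \eqref{eq:codazzi} componentwise in the pseudo-orthonormal frame $\{e_1,\dots,e_4\}$, using the second fundamental form \eqref{eq:second-fundamental-form} and the connection \eqref{eq:nabla11}. Then combine the resulting scalar identities with (A3) and (A4) and solve the linear system for the connection forms.

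\textbf{Step 1: the Codazzi identities.} Since $h$ is scalar valued with constant unit normal, $(\nabla_X h)(Y,Z)=X(h(Y,Z))-h(\nabla_XY,Z)-h(Y,\nabla_XZ)$. For each triple $(e_i,e_j,e_k)$ with $i<j$, I will compute $(\nabla_{e_i}h)(e_j,e_k)-(\nabla_{e_j}h)(e_i,e_k)=0$ by expanding $\nabla_{e_i}e_j$ through \eqref{eq:nabla11}. Only the entries $h_{11}=\mu$, $h_{23}=-\lambda$, $h_{34}=-1$, $h_{44}=\lambda$ are nonzero, so each identity is linear in the $\omega$'s. Its coefficients are polynomials in $\lambda,\mu$, and it also involves the derivatives $e_a(\lambda)$ and $e_a(\mu)$. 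This gives at most $6\times 4=24$ scalar equations, with some repetition.

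\textbf{Step 2: constants along $e_2,e_3,e_4$, and (A4).} I will first isolate the equations that contain $e_a(\mu)$ or $e_a(\lambda)$ for $a\ge2$. The $(e_1,e_a,e_1)$ equations give $e_a(\mu)=(\mu-\lambda)\cdot(\text{a form } \omega_{1b}(e_1))$, up to sign conventions. Assumption (A4), $\nabla_{e_1}e_1=0$, is exactly \eqref{eq:wzero1}, so these equations yield $e_a(\mu)=0$. Differentiating (A3) gives $e_a(\lambda)=f'(\mu)e_a(\mu)=0$, which proves \eqref{eq:muconst}.

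\textbf{Step 3: the $3\times3$ Jordan block.} Next I will use the Codazzi equations with $i,j\in\{2,3,4\}$. Because of the nilpotent part, these equations are triangular. Those built from the lightlike vector $e_2$ (the eigenvector of $S$ for $\lambda$) should force the vanishing conditions in \eqref{eq:wzero2}. The remaining ones should produce the rational relations \eqref{eq:wrel1} among $\omega_{23},\omega_{24},\omega_{34}$; the factors $2$ and $4$ come from repeated use of the off-diagonal $1$'s. Finally, the mixed equations with $i=1$, $j\in\{2,3,4\}$, where $\lambda-\mu\ne0$ by (A2), can be solved successively. I plan to use the $k=4$ equation to get \eqref{eq:wrel2}, then the $k=3$ equation to get \eqref{eq:wrel3}, whose symmetry $\omega_{12}(e_3)=\omega_{13}(e_2)$ comes from comparing two equations. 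After that, $\omega_{24}(e_1)$ and $e_1(\lambda)$ remain in two linear equations, and solving them gives \eqref{eq:wrel4} and \eqref{eq:e1lambda}. The factor $\tfrac13$ arises from eliminating between them.

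\textbf{Main obstacle.} The hard part is bookkeeping rather than ideas. The pseudo-orthonormal frame makes raising indices nontrivial: $e_2$ pairs with $e_3$, which produces the signs in \eqref{eq:nabla11}. The system is also overdetermined, so some equations must be used to establish the vanishings before others become solvable. I will fix the order by first treating the equations whose coefficient is $\lambda-\mu$ (which is invertible by (A2)) or $1$ (coming from the Jordan block), substituting each result back immediately. At the end I will check consistency by verifying that the leftover Codazzi components hold identically.
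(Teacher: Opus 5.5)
Your proposal is correct and is essentially the paper's proof. Both write the Codazzi equation componentwise in the frame $\{e_1,\dots,e_4\}$: (A4) gives \eqref{eq:wzero1}, the triplets $(1,a,1)$ together with (A3) give \eqref{eq:muconst}, and eliminating $\omega_{24}(e_1)$ between the $(1,3,2)$ and $(1,4,4)$ equations produces the factor $\tfrac13$.

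Only your predicted bookkeeping is slightly off. \eqref{eq:wrel2} comes from the $k=3$ triplets $(1,3,3)$ and $(1,4,3)$, and \eqref{eq:wrel3} comes from $(1,3,4)$ together with $(2,3,1)$. The vanishings $\omega_{12}(e_2)=\omega_{14}(e_2)=\omega_{12}(e_4)=0$ come from the mixed triplets $(1,2,2)$, $(1,2,4)$ and $(1,4,2)$, not from those with $i,j\in\{2,3,4\}$.
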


\begin{proof}
The assumption (A4) implies \eqref{eq:wzero1}. We apply the Codazzi equation \eqref{eq:codazzi} with $X=e_i$, $Y=e_j$ and $Z=e_k$ for the triplets $(i,j,k)$ listed below.

The triplets $(1,a,1)$, $a=2,3,4$, together with \eqref{lambndanufunctrelation01} and \eqref{eq:wzero1}, give \eqref{eq:muconst}. From the triplets $(1,2,2)$, $(1,2,4)$, $(1,4,2)$, $(2,3,2)$, $(2,3,4)$ and $(3,4,2)$ we obtain \eqref{eq:wzero2}. Next, the triplets $(3,4,3)$, $(2,3,3)$ and $(3,4,4)$ give \eqref{eq:wrel1}, and the triplets $(1,3,3)$ and $(1,4,3)$ give \eqref{eq:wrel2}.

Moreover, the triplet $(2,3,1)$ gives $\omega_{12}(e_3)=\omega_{13}(e_2)$, while the triplet $(1,3,4)$ gives $\omega_{12}(e_3)=-\omega_{23}(e_1)-(\lambda-\mu) \omega_{14}(e_3)$. Combining these with the first equation in \eqref{eq:wrel2} yields \eqref{eq:wrel3}.

Finally, the triplets $(1,3,2)$ and $(1,4,4)$ give
\begin{equation*}
e_1(\lambda)=(\lambda-\mu) \omega_{12}(e_3)+\omega_{24}(e_1)
\quad\text{and}\quad
e_1(\lambda)=-(\lambda-\mu) \omega_{14}(e_4)-2 \omega_{24}(e_1),
\end{equation*}
respectively. By eliminating $e_1(\lambda)$ from these equations and using \eqref{eq:wrel2} and \eqref{eq:wrel3}, we obtain \eqref{eq:wrel4}. Then \eqref{eq:e1lambda} follows from the second of these equations. The remaining triplets are identically satisfied by virtue of \eqref{eq:All:lem:ConnForms}.
\end{proof}

As a corollary of Lemma \ref{lem:ConnForms}, we have 
\begin{corollary}\label{cor:lie-brackets}
Let $M$ be a hypersurface satisfying the assumptions give in Remark \ref{rem:assumptions}. Then, the vector fields $e_1,e_2,e_3,e_4$ satisfy
	\begin{subequations}\label{eq:cor:lie-brackets}
	\begin{align}
		[e_1,e_2] &= \left[(\mu-\lambda)\omega_{13}(e_3)-3\omega_{23}(e_1)\right]e_2 + (\mu-\lambda) \omega_{14}(e_3) e_4, \\
		[e_1,e_3] &= \omega_{13}(e_3) e_2 + \left[(\mu-\lambda)\omega_{13}(e_3)-\omega_{23}(e_1)\right]e_3 + \left[\omega_{34}(e_1)-\omega_{14}(e_3)\right]e_4, \\
		[e_1,e_4] &= \left[\omega_{14}(e_3)+\omega_{34}(e_1)\right]e_2 + (\mu-\lambda) \omega_{14}(e_3) e_3 + \left[(\mu-\lambda)\omega_{13}(e_3)-2\omega_{23}(e_1)\right]e_4, \\
		[e_2,e_3] &= \omega_{23}(e_3) e_2 + \left(\omega_{34}(e_2)-2\omega_{23}(e_4)\right)e_4, \\
		[e_2,e_4] &= \left(\omega_{34}(e_2)+\omega_{23}(e_4)\right)e_2, \\
		[e_3,e_4] &= \omega_{34}(e_3) e_2 + \omega_{23}(e_4) e_3 - \omega_{34}(e_4) e_4.
	\end{align}
	\end{subequations}
\end{corollary}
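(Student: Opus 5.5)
The brackets follow from torsion-freeness: for each pair $i<j$ we have $[e_i,e_j]=\nabla_{e_i}e_j-\nabla_{e_j}e_i$. I will expand both terms with \eqref{eq:nabla11} and then simplify the coefficients using the relations of Lemma~\ref{lem:ConnForms}. No curvature information is needed; this is linear bookkeeping.

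I will carry out the six pairs in order, starting with those involving $e_1$. By \eqref{eq:wzero1}, $\nabla_{e_1}e_1=0$. For $[e_1,e_2]$, expand $\nabla_{e_1}e_2$ and $\nabla_{e_2}e_1$, which gives
\[
[e_1,e_2]=\bigl(\omega_{13}(e_2)-\omega_{12}(e_1)\bigr)e_1\ \text{-type terms}+\bigl(\omega_{13}(e_2)-\omega_{23}(e_1)\bigr)e_2+\omega_{12}(e_2)e_3+\bigl(\omega_{24}(e_1)-\omega_{14}(e_2)\bigr)e_4 .
\]
Now apply the lemma. Terms with $\omega_{12}(e_1)$, $\omega_{12}(e_2)$, $\omega_{14}(e_2)$ vanish by \eqref{eq:wzero1}--\eqref{eq:wzero2}. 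The $e_2$-coefficient is rewritten with \eqref{eq:wrel3} as $-2\omega_{23}(e_1)-(\lambda-\mu)\omega_{14}(e_3)$, and \eqref{eq:wrel2} then expresses $\omega_{14}(e_3)$. The $e_4$-coefficient $\omega_{24}(e_1)$ is rewritten with \eqref{eq:wrel4}.

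For the pairs $(1,3)$ and $(1,4)$ the procedure is the same. I will use \eqref{eq:wrel2} to trade $\omega_{14}(e_4)$ and $\omega_{14}(e_3)$, and \eqref{eq:wrel3} for $\omega_{12}(e_3)$ and $\omega_{13}(e_2)$. Where the target formula uses $\omega_{14}(e_3)$ rather than $\omega_{13}(e_3)$ and $\omega_{34}(e_1)$, I will keep $\omega_{14}(e_3)$ as a variable. I will fix one independent set: $\omega_{13}(e_3)$, $\omega_{13}(e_4)$, $\omega_{23}(e_1)$, $\omega_{34}(e_1)$, $\omega_{14}(e_3)$. I will then express every coefficient in it and compare with the stated right-hand sides.

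For the pairs among $e_2,e_3,e_4$, I will similarly expand $\nabla_{e_a}e_b-\nabla_{e_b}e_a$. The $e_1$-components are $\omega_{1\cdot}$ differences, which vanish by \eqref{eq:wrel3} ($\omega_{12}(e_3)=\omega_{13}(e_2)$) together with \eqref{eq:wzero2}. The remaining components reduce using \eqref{eq:wrel1} and \eqref{eq:wzero2}. For example, $[e_2,e_4]$ has $e_3$-component $\omega_{24}(e_2)$, which is $0$.

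The main obstacle is not conceptual. The difficulty is the consistency of the $e_1$-row coefficients, since several of them are combinations involving $(\lambda-\mu)^2$ through \eqref{eq:wrel4}. The stated formulas display only first-order expressions, so the hard step is checking that the $(\lambda-\mu)^2$ terms either cancel or are absorbed via \eqref{eq:wrel2}. If there is a mismatch, I will recheck the sign conventions in \eqref{eq:nabla11} against the pseudo-orthonormal metric \eqref{eq:shape0}. In particular, raising indices with $\langle e_2,e_3\rangle=-1$ swaps the $e_2$ and $e_3$ slots and introduces signs. I will correct any such sign before the final comparison.
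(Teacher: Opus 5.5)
Your route (torsion-freeness, \eqref{eq:nabla11} and Lemma~\ref{lem:ConnForms}) is exactly the paper's one-line proof, and your expansion of $[e_1,e_2]$ is correct. The step that fails is the last one, matching the displayed right-hand sides. Carried out correctly, the bookkeeping gives
\begin{align*}
[e_1,e_2]&=\bigl(-2\omega_{23}(e_1)+(\mu-\lambda)\omega_{14}(e_3)\bigr)e_2+\omega_{24}(e_1)e_4,\\
[e_1,e_3]&=\omega_{13}(e_3)e_2+(\mu-\lambda)\omega_{14}(e_3)e_3+\bigl(\omega_{34}(e_1)-\omega_{14}(e_3)\bigr)e_4,\\
[e_1,e_4]&=\bigl(\omega_{13}(e_4)+\omega_{34}(e_1)\bigr)e_2+\omega_{24}(e_1)e_3-\bigl((\lambda-\mu)\omega_{13}(e_4)+\omega_{23}(e_1)\bigr)e_4,\\
[e_2,e_3]&=\omega_{23}(e_3)e_2-\omega_{34}(e_2)e_4,\qquad [e_2,e_4]=5\omega_{34}(e_2)e_2,\\
[e_3,e_4]&=\bigl(\omega_{13}(e_4)-\omega_{14}(e_3)\bigr)e_1+\omega_{34}(e_3)e_2-2\omega_{34}(e_2)e_3-\omega_{34}(e_4)e_4 .
\end{align*}
Several of these coefficients differ from the displayed ones. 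The differences are not sign or index-raising artifacts, and the relations of the lemma cannot remove them:
\begin{itemize}
\item Your own $e_2$-coefficient of $[e_1,e_2]$, namely $-2\omega_{23}(e_1)-(\lambda-\mu)\omega_{14}(e_3)$, already disagrees with $(\mu-\lambda)\omega_{13}(e_3)-3\omega_{23}(e_1)$.
\item $\omega_{24}(e_1)=\tfrac13(\lambda-\mu)^2\bigl(\omega_{14}(e_3)-\omega_{13}(e_4)\bigr)$ is not $(\mu-\lambda)\omega_{14}(e_3)$.
\item The $e_4$-coefficient of $[e_2,e_3]$ is $\omega_{34}(e_2)-\omega_{24}(e_3)=-\omega_{34}(e_2)$, while the displayed $\omega_{34}(e_2)-2\omega_{23}(e_4)$ equals $-7\omega_{34}(e_2)$ by \eqref{eq:wrel1}.
\item The $e_3$-coefficient of $[e_3,e_4]$ is $-2\omega_{34}(e_2)$, not $\omega_{23}(e_4)=4\omega_{34}(e_2)$.
\end{itemize}
A decisive check is to set $\omega_{13}(e_3)=\omega_{13}(e_4)=\omega_{34}(e_1)=0$. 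The list above then reduces exactly to the paper's own \eqref{eq02a:Warped}--\eqref{eq02f:Warped}, and to the brackets used in the proof of Proposition~\ref{prop:warped}. The displayed formulas would instead give $[e_1,e_2]=-3\omega_{23}(e_1)e_2$ and $[e_1,e_3]=-\omega_{23}(e_1)e_3$, contradicting \eqref{eq02a:Warped} and \eqref{eq02b:Warped}. So the displayed corollary appears to be mistranscribed. What your argument actually proves is the corrected list; state that, rather than adjusting signs to force agreement.

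There is also an error in your treatment of the brackets among $e_2,e_3,e_4$. Your claim that their $e_1$-components vanish by \eqref{eq:wrel3} and \eqref{eq:wzero2} holds only for $[e_2,e_3]$ and $[e_2,e_4]$. The $e_1$-component of $[e_3,e_4]$ is $\omega_{13}(e_4)-\omega_{14}(e_3)$, which by \eqref{eq:wrel2} and \eqref{eq:wrel4} equals $-3\omega_{24}(e_1)/(\lambda-\mu)^2$, and Lemma~\ref{lem:ConnForms} does not force it to vanish. Since the displayed $[e_3,e_4]$ has no $e_1$-term, you would need an ingredient outside the lemma. For instance, \eqref{eq:muconst} gives $0=e_3(e_4\mu)-e_4(e_3\mu)=[e_3,e_4](\mu)=\langle[e_3,e_4],e_1\rangle\, e_1(\mu)$. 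Hence $\omega_{13}(e_4)=\omega_{14}(e_3)$, and then $\omega_{24}(e_1)=0$, on the closure of $\{e_1(\mu)\neq0\}$. On an open set where $\lambda$ and $\mu$ are constant, this argument gives nothing.
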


\begin{proof}
The proof follows from \eqref{eq:nabla11} and \eqref{eq:All:lem:ConnForms}.
\end{proof}

%%%%%%%%%%%%%%%%%%%%%%
%%%%%%%%%%%%%%%%%%%%%%
%%%%%%%%%%%%%%%%%%%%%%
%%%%%%%%%%%%%%%%%%%%%%
%%%%%%%%%%%%%%%%%%%%%%

\section{Uniqueness of Hypersurfaces}
In this section, we prove that hypersurfaces in $\mathbb E^5_1$ with the shape operator given in case (iii) of Lemma \ref{lem:shape-classification} exist and are unique up to isometries of $\mathbb E^5_1$.

\begin{lemma}\label{lem:RicciOperator}
Let $M$ be a hypersurface satisfying the assumptions given in Remark~\ref{rem:assumptions} and $p\in M$. Then, we have the following:
\begin{enumerate}
    \item[(i)] If $\lambda(p)+\mu(p)\neq0$, then $\operatorname{Ric}_p$ has the form given in case (iii) of Lemma~\ref{lem:shape-classification};
    \item[(ii)] If $\lambda(p)+\mu(p)=0$, then $\operatorname{Ric}_p$ has the form given in case (ii) of Lemma~\ref{lem:shape-classification};
    \item[(iii)] The functions
		\begin{equation}\label{eq:lem:RicciOperator:eigenvalues}
    r_1=3\lambda\mu, \qquad r_2=2\lambda^2+\lambda\mu
		\end{equation}
		are the only eigenvalues of $\operatorname{Ric}$ such that  $\operatorname{Ric}e_1=r_1e_1$, $\operatorname{Ric}e_2=r_2e_2$.
		\item[(iv)] The scalar curvature of $M$ is 
		\begin{equation}\label{eq:lem:RicciOperator:scal}
    \operatorname{Scal}=6\lambda^2+6\lambda\mu
		\end{equation}
		and it satisfies $e_a(\operatorname{Scal})=0, \ a=2,3,4$.
\end{enumerate}
\end{lemma}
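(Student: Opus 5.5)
The plan is to compute the Ricci operator algebraically from the Gauss equation and then read off its Jordan structure on the frame $\{e_1,e_2,e_3,e_4\}$. The hypersurface is Lorentzian in $\mathbb E^5_1$, so the unit normal is spacelike and the ambient curvature vanishes. With the conventions \eqref{eq:Ric} and \eqref{eq:gauss}, the Gauss equation should reduce to $R(X,Y)Z=g(SY,Z)SX-g(SX,Z)SY$. Tracing this gives the standard identity
$$\operatorname{Ric}=(\operatorname{tr}S)\,S-S^2 .$$
I would check this once against the sign convention $\operatorname{Ric}(X,Y)=R(\cdot,X,Y,\cdot)$. From \eqref{eq:shape} we have $\operatorname{tr}S=\mu+3\lambda$. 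We also have $Se_1=\mu e_1$, $Se_2=\lambda e_2$, $Se_3=\lambda e_3-e_4$ and $Se_4=e_2+\lambda e_4$, which is consistent with \eqref{eq:second-fundamental-form}.

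Next I would compute $S^2$ on the frame and substitute. This should give the following four identities:
\begin{align*}
\operatorname{Ric}e_1&=3\lambda\mu\, e_1,\\
\operatorname{Ric}e_2&=(2\lambda^2+\lambda\mu)e_2,\\
\operatorname{Ric}e_3&=e_2+(2\lambda^2+\lambda\mu)e_3-(\lambda+\mu)e_4,\\
\operatorname{Ric}e_4&=(\lambda+\mu)e_2+(2\lambda^2+\lambda\mu)e_4 .
\end{align*}
Hence $\operatorname{Ric}=r_1$ on $\operatorname{span}\{e_1\}$, and on $\operatorname{span}\{e_2,e_3,e_4\}$ we get $\operatorname{Ric}=r_2\,\mathrm{Id}+N$. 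Here $N$ is nilpotent with $Ne_2=0$, $Ne_4=(\lambda+\mu)e_2$ and $Ne_3=e_2-(\lambda+\mu)e_4$. This immediately gives (iii): the characteristic polynomial is $(t-r_1)(t-r_2)^3$, so $r_1,r_2$ are the only eigenvalues, with $\operatorname{Ric}e_1=r_1e_1$ and $\operatorname{Ric}e_2=r_2e_2$. Taking the trace gives $\operatorname{Scal}=r_1+3r_2=6\lambda^2+6\lambda\mu$. By \eqref{eq:muconst}, $e_a(\lambda)=e_a(\mu)=0$ for $a=2,3,4$, so $e_a(\operatorname{Scal})=0$, which is (iv).

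For (i) and (ii) I would compute $N^2e_3=-(\lambda+\mu)Ne_4=-(\lambda+\mu)^2e_2$. If $\lambda(p)+\mu(p)\neq0$, then $N$ has nilpotency index $3$ on the degenerate $3$-dimensional block. This is exactly the Jordan type of case (iii). To exhibit the canonical form explicitly, I would build a new pseudo-orthonormal frame $\tilde e_2=\alpha e_2$, $\tilde e_3$, $\tilde e_4$ adapted to the chain $e_3\mapsto Ne_3\mapsto N^2e_3$. This means solving for constants at $p$ (rescalings and null rotations, which depend on $\lambda+\mu$) so that \eqref{eq:shape0} holds and $N$ takes the pattern of \eqref{eq:shape}.

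If $\lambda(p)+\mu(p)=0$, then $Ne_4=0$ and $Ne_3=e_2$, so $N^2=0$ and $N$ has rank one. Relabelling $e_2,e_3$ as the null pair $\langle e_2,e_3\rangle=-1$ and keeping $e_1,e_4$ as the spacelike directions gives case (ii), with $\mu_1=r_1$ and $\mu_2=r_2$ (after a sign normalization of the off-diagonal entry).

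The main obstacle is the explicit normalization in case (i): the Jordan type is clear, but producing a pseudo-orthonormal frame in which the off-diagonal entries are exactly $1$ and $-1$ requires a careful choice of scaling and null rotation. There is also a possible degeneracy when $r_1=r_2$, that is $\lambda=0$. I would argue that the Jordan type of the block is unaffected, since $e_1$ is orthogonal to the block and both subspaces are $\operatorname{Ric}$-invariant.
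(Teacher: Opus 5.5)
Your proposal is correct and follows the paper's route. The Gauss equation gives $\operatorname{Ric}=(\operatorname{tr}S)S-S^2$, whose matrix on $\{e_1,e_2,e_3,e_4\}$ is exactly \eqref{eq:RicciOperator}, and (i)--(iv) are read off from it; the normalization you flag in (i) is routine, and the paper in fact writes down the adapted frame explicitly in \eqref{eq:Step3:Case1:e}.

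One small correction concerns (ii): no sign normalization is needed there. Taking $e_2,e_3,e_1,e_4$ as the four vectors of the case (ii) frame, the off-diagonal entry is already $+1$. A sign normalization would not even be possible, because the sign of $\langle Nv,v\rangle$ for $v\notin\ker N$ is invariant under isometries.
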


\begin{proof} 
By using the Gauss equation \eqref{eq:gauss} and \eqref{eq:second-fundamental-form}, we obtain that the Ricci operator $\operatorname{Ric}$ of $M$ is given by
\begin{equation}\label{eq:RicciOperator}
\operatorname{Ric}=\left(
\begin{array}{cccc}
 3\lambda\mu & 0 & 0 & 0 \\
 0 & 2\lambda^2+\lambda\mu & 1 & \lambda+\mu \\
 0 & 0 & 2\lambda^2+\lambda\mu & 0 \\
 0 & 0 & -\lambda-\mu & 2\lambda^2+\lambda\mu
\end{array}
\right)
\end{equation}
with respect to the frame field $\{e_1,e_2,e_3,e_4\}$. Proof of (i)-(iv) directly follows from \eqref{eq:RicciOperator}.
\end{proof}

\begin{lemma}\label{lem:intrinsic}
Let $\psi:(\Omega,g)\hookrightarrow\mathbb{E}^5_1$ be an isometric immersion of a $4$-dimensional Lorentzian manifold $(\Omega,g)$ such that $M:=\psi(\Omega)$ satisfies the assumptions given in Remark~\ref{rem:assumptions}. Then the vector fields $e_1,e_2,e_3,e_4$ and the functions $\lambda,\mu$ are intrinsic. More precisely, $e_1,e_2,e_3,e_4$ are uniquely determined by the metric $g$ up to their directions, and $\lambda$ and $\mu$ are intrinsic up to a simultaneous change of sign.
\end{lemma}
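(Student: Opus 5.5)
The plan is to read everything off the Ricci operator, which is intrinsic, using Lemma~\ref{lem:RicciOperator}. By \eqref{eq:RicciOperator}, with respect to $\{e_1,e_2,e_3,e_4\}$ the Ricci operator has the two eigenvalues $r_1=3\lambda\mu$ and $r_2=2\lambda^2+\lambda\mu$, and $r_1-r_2=2\lambda(\mu-\lambda)$. I would first assume (or restrict to the open set where) $\lambda\neq0$. Since $\lambda-\mu\neq0$ by (A2), $r_1\neq r_2$ there. Then $\ker(\operatorname{Ric}-r_1I)$ is the spacelike line spanned by $e_1$. Hence $e_1$ is determined by $g$ up to sign, being a unit vector. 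The $3$-dimensional generalized eigenspace $\ker(\operatorname{Ric}-r_2I)^3=e_1^\perp=\operatorname{span}\{e_2,e_3,e_4\}$ is likewise intrinsic.

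Next I would pin down $e_2,e_3,e_4$ inside $e_1^\perp$ from the nilpotent part $N=\operatorname{Ric}-r_2I$ restricted there. We have $Ne_2=0$, $Ne_3=e_2-(\lambda+\mu)e_4$ and $Ne_4=(\lambda+\mu)e_2$. The kernel of $N$ contains the null line $\mathbb{R}e_2$, so $e_2$ is intrinsic up to scaling.

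\textbf{Case $\lambda+\mu\neq0$.} Here $N^2e_3=-(\lambda+\mu)^2e_2\neq0$, so $N$ has a single Jordan block. Its image $\operatorname{Im}N=\operatorname{span}\{e_2,e_4\}$ is intrinsic, and $e_4$ is the unit spacelike vector there orthogonal to $e_2$, determined up to sign. Finally, $e_3$ is the unique null vector with $\langle e_2,e_3\rangle=-1$ orthogonal to $e_4$.

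\textbf{Case $\lambda+\mu=0$.} By Lemma~\ref{lem:RicciOperator}(ii), $\operatorname{Ric}$ is of type (ii). Then $\operatorname{Im}N=\mathbb{R}e_2$ and $\ker N=\operatorname{span}\{e_2,e_4\}$, which again gives $e_4$ up to sign. The normalization $Ne_3=e_2$ fixes $e_3$, and with it the scaling of $e_2$.

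The main subtlety is the boost freedom $e_2\mapsto ce_2$, $e_3\mapsto c^{-1}e_3$, which preserves \eqref{eq:shape0}. It is killed by the requirement that $S$ have the normalized form \eqref{eq:shape}, i.e.\ the entry $1$. On the Ricci side, one normalizes via $Ne_3=e_2-(\lambda+\mu)e_4$. So "up to directions" should be understood as up to signs compatible with this normalization. I would check this explicitly, perhaps choosing the scaling so that the coefficient of $e_2$ in $Ne_3$ equals $1$.

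For $\lambda,\mu$: $r_1$ and $r_2$ are intrinsic functions, with $r_1=3\lambda\mu$ and $r_2-\tfrac13 r_1=2\lambda^2$. So $\lambda^2=(3r_2-r_1)/6$ is intrinsic, giving $\lambda=\pm\sqrt{(3r_2-r_1)/6}$, and then $\mu=r_1/(3\lambda)$. Thus $(\lambda,\mu)$ is determined up to a simultaneous sign change, matching the freedom $\xi\mapsto-\xi$. By continuity and $\lambda\ne0$ the sign choice is global on connected $M$.

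The expected obstacle is the degenerate locus $\lambda=0$, where $r_1=r_2=0$ and the eigenvalue splitting fails. I would handle it by a density or continuity argument, using the open dense set where $\lambda\neq0$ if that set is dense. Otherwise I would use the relation $\lambda=f(\mu)$ together with $e_a(\lambda)=0$ from Lemma~\ref{lem:ConnForms}: on an open set with $\lambda\equiv0$, $\mu$ would be locally constant, and I would treat that subcase separately, possibly showing it contradicts \eqref{eq:e1lambda}.
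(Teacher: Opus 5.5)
Your handling of $\{\lambda\neq0\}$ works for $e_1,\lambda,\mu$. Since $r_1-r_2=2\lambda(\mu-\lambda)$, the simple eigenvalue of $\operatorname{Ric}$ has eigenspace $\mathbb R e_1$. This is cleaner than the paper's gradient argument, since it needs neither the set $\mathcal A$ nor Magid's isoparametric result.

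\textbf{First gap: recovering $e_2,e_3,e_4$ from $N=\operatorname{Ric}-r_2\mathrm{Id}$.} You only account for boosts, but the null rotations
\[
\tilde e_2=e_2,\qquad \tilde e_3=e_3+\beta e_4+\tfrac{\beta^2}{2}e_2,\qquad \tilde e_4=e_4+\beta e_2,\qquad \beta\in\mathbb R,
\]
also preserve \eqref{eq:shape0}. The plane $\operatorname{span}\{e_2,e_4\}$ is degenerate, so every $\pm e_4+\beta e_2$ is a unit spacelike vector in it orthogonal to $e_2$. Hence ``$e_4$ is determined up to sign'' is false in both of your cases.

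For $\lambda+\mu\neq0$ this is repairable. One computes $N\tilde e_3=(1+2\beta(\lambda+\mu))\tilde e_2-(\lambda+\mu)\tilde e_4$, so normalizing the $e_2$-coefficient of $Ne_3$ to $1$ forces $\beta=0$. The paper does the equivalent with the normal form \eqref{eq:Step3:Case1}.

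For $\lambda+\mu=0$ it is not repairable this way. There $N\tilde e_2=N\tilde e_4=0$ and $N\tilde e_3=\tilde e_2$ for every $\beta$, so no normalization of the Ricci operator distinguishes these frames. Your claim that $Ne_3=e_2$ fixes $e_3$ is therefore wrong, and you need curvature information beyond $\operatorname{Ric}$. The paper uses $R(e_1,f_3,e_1,f_3)=2\varepsilon\mu\beta$, which vanishes iff $\beta=0$ because $\mu\neq0$ at such points. Equivalently, by the Gauss equation $K=R(\cdot,e_1)e_1$ equals $\mu S$ on $e_1^\perp$, so it recovers the Jordan form of $S$ itself.

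\textbf{Second gap: the locus $\lambda=0$, which you leave open.} There $r_1=r_2=0$ and $\ker\operatorname{Ric}_p=\operatorname{span}\{e_1,e_2\}$ is degenerate. Every $\pm(e_1+ae_2)$ is a unit spacelike vector in it, and $\mu=r_1/(3\lambda)$ is undefined, so neither $e_1$ nor $\mu$ can be read off from $\operatorname{Ric}$.

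A continuity argument would need $\{\lambda\neq0\}$ to be dense, which you do not show. Your fallback also fails. On an open set, $\lambda=f(\mu)\equiv0$ only says that $f$ vanishes on the range of $\mu$, so $\mu$ need not be locally constant. Moreover, \eqref{eq:e1lambda} then becomes one relation among connection forms, with no evident contradiction.

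The paper instead handles these points pointwise using more of the curvature tensor:
\begin{itemize}
\item $e_1(p)$ is the unique (up to sign) unit spacelike $X\in\ker\operatorname{Ric}_p$ with $R(X^\perp,X^\perp)X^\perp=(\ker\operatorname{Ric}_p)^\perp$;
\item $\mu^2$ comes from $K^2=-\mu^2D$, where $D=\operatorname{Ric}-K$.
\end{itemize}

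\textbf{A smaller point.} Your global sign argument uses $\lambda\neq0$, which only makes the sign locally constant on $\{\lambda\neq0\}$. The paper makes it global through $\varepsilon=(\tilde\lambda-\tilde\mu)/(\lambda-\mu)$, using that $\lambda-\mu$ never vanishes.
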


\begin{proof}
\textit{Step 1. Identifying the points at which $\lambda=0$ and $\mu=0$.} Let $p\in M$. By a direct computation using \eqref{eq:RicciOperator}, one can observe that
\begin{align*}
\lambda(p)=0 &\iff \dim\ker\operatorname{Ric}_p=2,\\
\mu(p)=0 &\iff \dim\ker\operatorname{Ric}_p=1 \text{ and } \ker\operatorname{Ric}_p \text{ is non-degenerate}.
\end{align*}

\textit{Step 2. Identifying $e_1$.} Because of Lemma~\ref{lem:ConnForms}, we have $e_a(\lambda)=e_a(\mu)=0$ for $a=2,3,4$. Therefore, for any smooth function $f=f(\lambda,\mu)$ we have
\begin{equation}\label{eq:Step2:grad}
\nabla f=e_1(f) e_1 .
\end{equation}
In particular, from \eqref{eq:lem:RicciOperator:eigenvalues} we obtain
\begin{equation}\label{eq:Step2:e1r}
e_1(r_1)=3\bigl(\mu  e_1(\lambda)+\lambda  e_1(\mu)\bigr),\qquad
e_1(r_2)=(4\lambda+\mu)  e_1(\lambda)+\lambda  e_1(\mu).
\end{equation}
Consider the sets
$$\mathcal U_1=\{p\in M:\ \nabla r_1(p)\neq0 \text{ or } \nabla r_2(p)\neq0\},\qquad \mathcal U_2=\{p\in M:\ \lambda(p)\mu(p)=0\}, $$
and $\mathcal A=M\setminus(\mathcal U_1\cup\mathcal U_2)$. Since $r_1,r_2$ are eigenvalues of $\operatorname{Ric}$, $\mathcal U_1$ is intrinsic. Because of Step~1, $\mathcal U_2$ is also intrinsic.

\textit{Case 1. $p\in\mathcal U_1$.} Because of \eqref{eq:Step2:grad}, we have
$$e_1(p)=\pm\frac{\nabla r_i}{|\nabla r_i|}(p),$$
where $i\in\{1,2\}$ is chosen such that $\nabla r_i(p)\neq0$.

\textit{Case 2. $p\in\mathcal U_2$.} First, assume $\lambda(p)=0$. Then $\mu(p)\neq0$ and $\ker\operatorname{Ric}_p=\operatorname{span}\{e_1,e_2\}$. Therefore, a unit space-like vector $X\in\ker\operatorname{Ric}_p$ has the form $X=\pm(e_1+a e_2)$ for some $a\in\mathbb R$. Put
$$W_a:=X^\perp=\operatorname{span}\{e_2,\ a e_1+e_3,\ e_4\}$$
and let $R(W_a,W_a)W_a$ denote the subspace spanned by $\{R(Y,Z)U:\ Y,Z,U\in W_a\}$. By a direct computation using the Gauss equation \eqref{eq:gauss}, we obtain
$$R(W_a,W_a)W_a=\operatorname{span}\bigl\{-a\mu(e_1+a e_2)+e_4,\ e_2\bigr\}.$$
On the other hand, $(\ker\operatorname{Ric}_p)^\perp=\operatorname{span}\{e_2,e_4\}$. Since $\mu(p)\neq0$, we get
$$R(W_a,W_a)W_a=(\ker\operatorname{Ric}_p)^\perp \iff a=0 \iff X=\pm e_1(p).$$
Hence $e_1(p)$ is, up to its direction, the unique unit space-like vector $X\in\ker\operatorname{Ric}_p$ satisfying $R(X^\perp,X^\perp)X^\perp=(\ker\operatorname{Ric}_p)^\perp$.

Next, assume $\mu(p)=0$. Then $\ker\operatorname{Ric}_p=\operatorname{span}\{e_1\}$, and $e_1(p)$ is, up to sign, the unit vector in $\ker\operatorname{Ric}_p$.

\textit{Case 3. $p\in\mathcal A$.} We claim that $\mathcal A$ has empty interior. Assume, on the contrary, that $\mathcal A$ contains a non-empty connected open subset $\mathcal O$. On $\mathcal O$ we have $\lambda\mu\neq0$ and $e_1(r_1)=e_1(r_2)=0$. From \eqref{eq:Step2:e1r}, we get $3e_1(r_2)-e_1(r_1)=12\lambda  e_1(\lambda)$, which gives $e_1(\lambda)=0$ and then $e_1(\mu)=0$. Combining this with $e_a(\lambda)=e_a(\mu)=0$, $a=2,3,4$, we see that $\lambda$ and $\mu$ are constant on $\mathcal O$, i.e., $\mathcal O$ is an isoparametric hypersurface. Because of \cite[Corollary~2.7]{Magid1985} and (A2), we get $\lambda\mu=0$ on $\mathcal O$, which is a contradiction.

Therefore, $M\setminus\mathcal A$ is dense in $M$. Consider the $(1,1)$-tensor field $P(X)=\langle X,e_1\rangle e_1$. Because of Case~1 and Case~2, $P$ is determined by $g$ on $M\setminus\mathcal A$. Since $P$ is continuous, it is determined by $g$ on $M$. Since $P_p$ determines $e_1(p)$ up to sign, $e_1$ is intrinsic.

\textit{Step 3. Identifying $\lambda,\mu$.} Because of Step~2, $e_1$ is intrinsic. Therefore, the functions
$$r_1=\operatorname{Ric}(e_1,e_1),\qquad r_2=\tfrac13\bigl(\operatorname{Scal}-r_1\bigr)$$
are intrinsic. Because of Lemma~\ref{lem:RicciOperator}, they are the eigenvalues given in \eqref{eq:lem:RicciOperator:eigenvalues}, i.e., $\operatorname{Ric}(e_1)=r_1e_1$ and $\operatorname{Ric}(e_2)=r_2e_2$.

\textit{Case 1. $\lambda(p)\neq0$.} From \eqref{eq:lem:RicciOperator:eigenvalues}, we have $3r_2-r_1=6\lambda^2$ and $r_1=3\lambda\mu$. Therefore, $3r_2-r_1>0$ at $p$ and
\begin{equation}\label{eq:Step3:lambdamu}
\lambda=\varepsilon\sqrt{\frac{3r_2-r_1}{6}},\qquad
\mu=\varepsilon \frac{\sqrt2 r_1}{\sqrt{3(3r_2-r_1)}},\qquad \varepsilon\in\{-1,1\}.
\end{equation}

\textit{Case 2. $\lambda(p)=0$.} In this case $r_1=r_2=0$, so \eqref{eq:Step3:lambdamu} gives no information about $\mu$. Consider the operators $K,D$ on $e_1(p)^\perp$ defined by
$$K(X)=R(X,e_1)e_1,\qquad D=\operatorname{Ric}-K .$$
Because of Step~2, $K$ and $D$ are intrinsic. By a direct computation using the Gauss equation \eqref{eq:gauss} and \eqref{eq:RicciOperator}, we obtain
$$  K(e_2)=0,\quad K(e_3)=-\mu e_4,\quad K(e_4)=\mu e_2,\qquad D(e_2)=D(e_4)=0,\quad D(e_3)=e_2 .$$
Therefore, $D\neq0$ and $K^2=-\mu^2D$. Hence $\mu^2$ is intrinsic, and $\mu=\varepsilon\sqrt{\mu^2}$ with $\varepsilon\in\{-1,1\}$.

Finally, let $(\tilde\lambda,\tilde\mu)$ be the principal curvatures corresponding to another isometric immersion of $(\Omega,g)$ satisfying the assumptions given in Remark~\ref{rem:assumptions}. The above argument shows that $(\tilde\lambda,\tilde\mu)=\varepsilon(\lambda,\mu)$ for a function $\varepsilon:M\to\{-1,1\}$. Since $\lambda-\mu$ vanishes nowhere, $\varepsilon=(\tilde\lambda-\tilde\mu)/(\lambda-\mu)$ is continuous. Since $M$ is connected, $\varepsilon$ is constant. Hence $\lambda$ and $\mu$ are intrinsic up to a simultaneous change of sign.

\textit{Step 4. Identifying $e_2,e_3,e_4$.} Because of Step~2, $e_1$ is intrinsic, and so is the $\operatorname{Ric}$-invariant subspace $e_1(p)^\perp=\operatorname{span}\{e_2,e_3,e_4\}$. Because of Lemma~\ref{lem:RicciOperator}, the sets $\{p\in M:\ \lambda(p)+\mu(p)\neq0\}$ and $\{p\in M:\ \lambda(p)+\mu(p)=0\}$ are intrinsic. We consider these two cases separately.

\textit{Case 1. $\lambda(p)+\mu(p)\neq0$.} Because of Lemma~\ref{lem:RicciOperator}(i), there exists a pseudo-orthonormal frame $\{f_2,f_3,f_4\}$ of $e_1(p)^\perp$, i.e.,
$$\langle f_2,f_3\rangle=-1,\qquad \langle f_4,f_4\rangle=1,\qquad \langle f_i,f_j\rangle=0 \text{ otherwise},$$
such that
\begin{equation}\label{eq:Step3:Case1}
\operatorname{Ric}(f_2)=r_2f_2,\qquad
\operatorname{Ric}(f_3)=r_2f_3-f_4,\qquad
\operatorname{Ric}(f_4)=r_2f_4+f_2 .
\end{equation}
Such a frame is unique up to sign. Indeed, if $\{\tilde f_2,\tilde f_3,\tilde f_4\}$ is another such frame, the linear map $f_i\mapsto\tilde f_i$ is an isometry of $e_1(p)^\perp$ commuting with $N=\operatorname{Ric}-r_2 \mathrm{Id}$. Hence it has the form $a \mathrm{Id}+bN+cN^2$, and the conditions on the inner products give $b=c=0$, $a=\pm1$. By a direct computation using \eqref{eq:RicciOperator}, we obtain
\begin{equation}\label{eq:Step3:Case1:e}
e_2=-\frac{f_2}{\lambda+\mu},\qquad
e_3=-\frac{f_2}{8(\lambda+\mu)^3}-(\lambda+\mu)f_3-\frac{f_4}{2(\lambda+\mu)},\qquad
e_4=-\frac{f_2}{2(\lambda+\mu)^2}-f_4 .
\end{equation}

\textit{Case 2. $\lambda(p)+\mu(p)=0$.} In this case $r_2=\mu^2$ and $\mu(p)\neq0$. Because of Lemma~\ref{lem:RicciOperator}(ii), there exists a pseudo-orthonormal frame $\{f_2,f_3,f_4\}$ of $e_1(p)^\perp$ such that
\begin{equation}\label{eq:Step3:Case2}
\operatorname{Ric}(f_2)=\mu^2f_2,\qquad
\operatorname{Ric}(f_3)=\mu^2f_3+f_2,\qquad
\operatorname{Ric}(f_4)=\mu^2f_4 .
\end{equation}
In contrast to Case~1, such a frame is not unique. By a direct computation, the frames satisfying \eqref{eq:Step3:Case2} are exactly
\begin{eqnarray*}
f_2&=&\varepsilon e_2,\\
f_3&=&\varepsilon\Bigl(e_3+\frac{\beta^2}{2}e_2\Bigr)+\beta e_4,\\
f_4&=&\delta\bigl(e_4+\beta e_2\bigr),
\end{eqnarray*}
where $\varepsilon,\delta\in\{-1,1\}$ and $\beta\in\mathbb R$. On the other hand, the Gauss equation \eqref{eq:gauss} gives
$$R(e_1,f_3,e_1,f_3)=2\varepsilon\mu\beta .$$
Since $\mu(p)\neq0$, the additional condition $R(e_1,f_3,e_1,f_3)=0$ holds if and only if $\beta=0$. Therefore, the frames satisfying \eqref{eq:Step3:Case2} and $R(e_1,f_3,e_1,f_3)=0$ are exactly
$$f_2=\varepsilon e_2,\qquad f_3=\varepsilon e_3,\qquad f_4=\delta e_4,\qquad \varepsilon,\delta\in\{-1,1\}.$$
Since $R(e_1,\cdot ,e_1,\cdot)$ is intrinsic because of Step~2, $e_2,e_3,e_4$ are determined up to sign at $p$.
\end{proof}

Now, we are ready to prove the main result of this section:
\begin{theorem}\label{thm:uniqueness}
Let $\psi,\tilde\psi:(\Omega,g)\hookrightarrow\mathbb{E}^5_1$ be two isometric immersions of a $4$-dimensional Lorentzian manifold $(\Omega,g)$ such that $\psi(\Omega)$ and $\tilde\psi(\Omega)$ satisfy the assumptions given in Remark~\ref{rem:assumptions}. Then, there exists an isometry $\tau:\mathbb{E}^5_1\to\mathbb{E}^5_1$ such that $\tilde\psi=\tau\circ\psi$.
\end{theorem}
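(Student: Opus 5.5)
The plan is to reduce Theorem~\ref{thm:uniqueness} to the uniqueness part of Theorem~\ref{thm:FTH} with $c=0$, $\varepsilon=1$ and $N^{5}_1(0)=\mathbb E^5_1$. To do so, we only need to show that $\psi$ and $\tilde\psi$ have the same shape operator. We may have to replace $\tilde\xi$ by $-\tilde\xi$ to arrange this; the replacement is harmless, since it changes $\tilde S$ to $-\tilde S$ and the fundamental theorem allows any unit normal of the prescribed causal character.

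Let $S$ and $\tilde S$ be the shape operators of $\psi$ and $\tilde\psi$, with frames $\{e_i\}$, $\{\tilde e_i\}$ and principal curvatures $(\lambda,\mu)$, $(\tilde\lambda,\tilde\mu)$ as in case (iii) of Lemma~\ref{lem:shape-classification}. By Lemma~\ref{lem:intrinsic}, there are a constant $\varepsilon\in\{-1,1\}$ with $(\tilde\lambda,\tilde\mu)=\varepsilon(\lambda,\mu)$ on $\Omega$, and pointwise signs with $\tilde e_i=\pm e_i$. Replacing $\tilde\xi$ by $\varepsilon\tilde\xi$ turns $\tilde S$ into $\varepsilon\tilde S$ and the principal curvatures into $(\lambda,\mu)$. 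The frame is then renormalized so that the Jordan block still has the entries $+1,-1$ as in \eqref{eq:shape}; for instance, $\tilde e_2\mapsto-\tilde e_2$, $\tilde e_3\mapsto -\tilde e_3$ keeps $\langle\tilde e_2,\tilde e_3\rangle=-1$ and flips the off-diagonal signs. We may therefore assume $\tilde\lambda=\lambda$ and $\tilde\mu=\mu$.

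Next we compare $\tilde S$ with $S$ on the intrinsic frame. Since $\tilde e_1=\pm e_1$, we get $\tilde S e_1=\mu e_1=Se_1$. On $e_1^\perp$, $\tilde S-\lambda\,\mathrm{Id}$ is the nilpotent operator $\tilde N$ with $\tilde N\tilde e_4=\tilde e_2$, $\tilde N\tilde e_3=-\tilde e_4$, $\tilde N\tilde e_2=0$. The remaining issue is that the independent sign choices $\tilde e_i=\pm e_i$ might flip these entries. We will use the metric constraints: $\langle\tilde e_2,\tilde e_3\rangle=-1$ forces $\tilde e_2,\tilde e_3$ to carry the same sign $\sigma$, and $\tilde e_4=\delta e_4$. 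This gives $\tilde N e_4=\sigma\delta\, e_2$ and $\tilde N e_3=-\sigma\delta\, e_4$, so $\tilde N=\sigma\delta N$. Hence $\tilde S=S$ up to the sign $\sigma\delta$ in the nilpotent part.

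The main obstacle is removing this residual sign. For this we will use the Ricci operator \eqref{eq:RicciOperator}, which is intrinsic and hence the same for both immersions. By the Gauss equation, its $(2,3)$ entry is $1$ and its $(2,4)$ entry is $\lambda+\mu$, and both are sensitive to the sign of the off-diagonal entries of the shape operator. Concretely, computing $\operatorname{Ric}$ from $\tilde S=\lambda\mathrm{Id}+\sigma\delta N$ expresses $\operatorname{Ric}(e_3)$ in the $e_1^\perp$-frame with entries depending on $\sigma\delta$, for example through the term $(\lambda+\mu)\sigma\delta$ in the $e_4$-coefficient. Comparing this with $\operatorname{Ric}$ computed from $S$ gives $\sigma\delta=1$ wherever $\lambda+\mu\neq0$. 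At points where $\lambda+\mu=0$, we will use the intrinsic quantity $R(e_1,e_3,e_1,e_4)$ instead; by the Gauss equation it is linear in the off-diagonal entry, with a coefficient proportional to $\mu\neq0$ there. Since $\sigma\delta$ is continuous (as a ratio of smooth objects) and $\Omega$ is connected, it is constant, and so $\tilde S=S$ everywhere.

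Finally, Theorem~\ref{thm:FTH}(Uniqueness) applies to $\psi$ and $\tilde\psi$, which are isometric immersions of the simply connected $(\Omega,g)$ with the same shape operator and spacelike unit normals. It yields an isometry $\tau$ of $\mathbb E^5_1$ with $\tilde\psi=\tau\circ\psi$.
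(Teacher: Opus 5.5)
Your proposal is correct and is essentially the paper's argument. After Lemma~\ref{lem:intrinsic}, the sign of the shape operator is fixed by intrinsic curvature data: $\operatorname{Ric}$ where $\lambda+\mu\neq0$, and $R(\cdot,e_1)e_1$ (your $R(e_1,e_3,e_1,e_4)$) where $\mu\neq0$; Theorem~\ref{thm:FTH} then finishes. The paper packages this as the closed formulas \eqref{eq:thm:uniqueness:S1}--\eqref{eq:thm:uniqueness:S2}, odd in $(\lambda,\mu)$, which avoids your frame-sign bookkeeping and needs only $e_1$, not $e_2,e_3,e_4$, to be intrinsic.

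One small slip: the $(2,3)$ entry of $\operatorname{Ric}=(\operatorname{tr}S)S-S^2$ comes from the $S^2$ term and does not see the sign $\sigma\delta$. Only the $\pm(\lambda+\mu)$ entries do, and that is the comparison you actually use.
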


\begin{proof}
Let $S$ and $\tilde S$ be the shape operators of $\psi$ and $\tilde\psi$ with respect to the unit normal fields $\xi$ and $\tilde\xi$, and let $\{e_1,e_2,e_3,e_4\}$, $\lambda,\mu$ and $\{\tilde e_1,\tilde e_2,\tilde e_3,\tilde e_4\}$, $\tilde\lambda,\tilde\mu$ be the corresponding pseudo-orthonormal frame fields and principal curvatures. Because of Lemma~\ref{lem:intrinsic}, we have $\tilde e_1=\pm e_1$ and $(\tilde\lambda,\tilde\mu)=\varepsilon(\lambda,\mu)$ for a constant $\varepsilon\in\{-1,1\}$. We are going to show that $\tilde S=\varepsilon S$ on $\Omega$.

Let $P(X)=\langle X,e_1\rangle e_1$ and $X^\perp=X-P(X)$. We also consider the operators
$$K(X)=R(X,e_1)e_1,\qquad N=\operatorname{Ric}-r_2 \mathrm{Id}$$
on $e_1^\perp$. Since $\tilde e_1=\pm e_1$ and $\tilde r_2=r_2$, these operators are the same for $\psi$ and $\tilde\psi$. Because of the Gauss equation \eqref{eq:gauss} and \eqref{eq:RicciOperator}, a direct computation gives
\begin{align}
S(X)&=\mu P(X)+\frac1\mu K(X^\perp) && \text{if } \mu\neq0, \label{eq:thm:uniqueness:S1}\\
S(X)&=\mu P(X)+\lambda X^\perp+\frac{1}{\lambda+\mu}N(X^\perp)+\frac{1}{(\lambda+\mu)^3}N^2(X^\perp) && \text{if } \lambda+\mu\neq0. \label{eq:thm:uniqueness:S2}
\end{align}
Since $\lambda-\mu$ vanishes nowhere, at each point of $\Omega$ at least one of $\mu$ and $\lambda+\mu$ is non-zero. The right-hand sides of \eqref{eq:thm:uniqueness:S1} and \eqref{eq:thm:uniqueness:S2} are odd in $(\lambda,\mu)$. Therefore $\tilde S=\varepsilon S$ on $\Omega$.

Now, by replacing $\tilde\xi$ with $\varepsilon\tilde\xi$ if necessary, we let $\tilde S=S$. Hence, the uniqueness part of Theorem~\ref{thm:FTH} implies the existence of an isometry $\tau$ of $\mathbb E^5_1$ such that $\tilde\psi=\tau\circ\psi$.
\end{proof}

%%%%%%%%%%%%%%%%%%%%%%
%%%%%%%%%%%%%%%%%%%%%%
%%%%%%%%%%%%%%%%%%%%%%
%%%%%%%%%%%%%%%%%%%%%%
%%%%%%%%%%%%%%%%%%%%%%

\section{Warped Products with Non-Diagonalizable Shape Operator}
In this section, we are going to obtain a classification of warped product hypersurfaces whose shape operator has the form given in  case (iii) of Lemma~\ref{lem:shape-classification}.

First, we construct an example of hypersurface with non-diagonalizable shape operator.
\begin{example}\label{ex:Warped}
Let $\alpha=(\phi,\varphi)\colon I\to\mathbb{R}^2$ be a smooth arc-length parametrized curve and $N^3_1$ be a Lorentzian manifold.  For a given $\lambda_0\in \mathbb R$, let $\hat\psi:N\to \mathbb S^4_1$ be an isometric immersion such that $\hat\psi(N)$ is a Lorentzian isoparametric hypersurface of $\mathbb S^4_1$ whose shape operator has the matrix representation \eqref{eq:shape:hatM} with respect to the pseudo-orthonormal frame field $\{E_1,E_2,E_3\}$. Consider the mapping $\psi:(I\times N,g)\to\mathbb E^5_1$ given by 
\begin{align}\label{ex:Warped:PosVect}
\psi(x,(y,u,v))&=\varphi(x)(i\circ\hat\psi)(y,u,v)+\phi(x) (i\circ\hat \xi)(y,u,v),
\end{align}
where $\hat \xi$ is the unit normal vector field of $\hat\phi$, $i:\mathbb S^4_1\subset \mathbb E^5_1$ is the inclusion.
\end{example}

\begin{proposition}\label{prop:warpedproduct1:ex:Warped}
Let $\psi$ be  the mapping $\psi$ defined in Example \ref{ex:Warped} for some $\alpha=(\phi,\varphi)$ and $\lambda_0$.  Then, we have the followings:
\begin{enumerate}
\item[(i)] $\psi$ is an isometric immersion if and only if $\phi,\varphi$ and $\lambda_0$ satisfy
\begin{equation}\label{eq1:prop:warpedproduct1:ex:Warped}
\varphi (x)-\lambda _0 \phi (x)\neq 0
\end{equation}
for all $x\in I$.
\item[(ii)] When \eqref{eq1:prop:warpedproduct1:ex:Warped} is satisfied, $g$ is warped product if and only if $\alpha$ lies on a line through the origin.

\item[(iii)] When \eqref{eq1:prop:warpedproduct1:ex:Warped} is satisfied, the shape operator $S$ of $\psi$ has the matrix representation given in Case (iii) of Lemma~\ref{lem:shape-classification} if and only if $\alpha$  satisfies
\begin{equation}\label{eq2:prop:warpedproduct1:ex:Warped}
\varphi (x) \varphi '(x)+\phi (x) \phi '(x)\neq 0
\end{equation}
for all $x\in I$.

\item[(iv)] When \eqref{eq1:prop:warpedproduct1:ex:Warped} and \eqref{eq2:prop:warpedproduct1:ex:Warped} are satisfied, $S$  satisfies \eqref{eq:shape} for the functions
\begin{equation*}
\mu=\kappa, \qquad \lambda(x)=\frac{\lambda _0 \varphi '(x)+\phi '(x)}{\lambda _0 \phi (x)-\varphi (x)}
\end{equation*}
and the vector fields
\begin{align*}
e_1=&\partial_x,  \qquad e_2=\frac{\varphi   \varphi ' +\phi   \phi ' }{\left(\varphi  -\lambda _0 \phi  \right){}^3} E_1^h,\\
e_3=&\frac{3 \phi  ^2}{8 \left(\lambda _0 \phi  -\varphi  \right) \left(\varphi   \varphi ' +\phi   \phi ' \right)}E_1^h+\frac{\varphi  -\lambda _0 \phi  }{\varphi   \varphi ' +\phi   \phi ' }E_2^h\\&-\frac{\phi  }{2 \varphi   \varphi ' +2 \phi   \phi ' }E_3^h,\\
e_4=&-\frac{3 \phi  }{2 \left(\varphi  -\lambda _0 \phi  \right){}^2}E_1^h+\frac{1}{\lambda _0 \phi  -\varphi  }E_3^h,
\end{align*}
where $\kappa$ is the curvature of $\alpha$.
\end{enumerate}
\end{proposition}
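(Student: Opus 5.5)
The plan is to compute the differential and the shape operator of $\psi$ directly in terms of the frame $\{\partial_x, E_1^h,E_2^h,E_3^h\}$. Here $E_i^h$ denotes the horizontal lift of $E_i$ to $I\times N$, and we work with the shape operator $\hat S$ of $\hat\psi$ given by \eqref{eq:shape:hatM}. The only facts needed are the ambient identities for $\hat\psi$ and $\hat\xi$ in $\mathbb E^5_1$. Namely, $\hat\psi$ is the position vector in $\mathbb S^4_1$, with $\langle\hat\psi,\hat\psi\rangle=1$, $\langle\hat\psi,\hat\xi\rangle=0$ and $\langle\hat\xi,\hat\xi\rangle=1$. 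Moreover $d\hat\psi(E_i)=E_i$ and $d\hat\xi(E_i)=-\hat S E_i$, since the $\mathbb S^4_1$-normal component of $\bar\nabla_{E_i}\hat\xi$ vanishes because $\langle\hat\xi,\hat\psi\rangle=0$.

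From these identities we get
\[
\psi_x=\varphi'\hat\psi+\phi'\hat\xi, \qquad d\psi(E_i^h)=(\varphi\,\mathrm{Id}-\phi\hat S)E_i .
\]
Thus $\psi_x$ is a unit vector orthogonal to $T\hat\psi(N)$, because $\varphi'^2+\phi'^2=1$. The map $\psi$ is an immersion exactly when $\varphi\,\mathrm{Id}-\phi\hat S$ is invertible. Since $\hat S$ has the single eigenvalue $\lambda_0$ (triple Jordan block), this holds iff $\varphi-\lambda_0\phi\ne0$, which proves (i). The induced metric is
\[
g=dx^2+\hat g\bigl((\varphi-\phi\hat S)\cdot,(\varphi-\phi\hat S)\cdot\bigr).
\]
This is a warped product iff $\varphi-\phi\hat S$ is a scalar multiple of the identity up to an $x$-independent factor, i.e.\ iff $\phi/\varphi$ is constant. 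That is exactly the condition that $\alpha$ lies on a line through the origin, which gives (ii).

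For (iii)--(iv), take the unit normal $\xi=-\phi'\hat\psi+\varphi'\hat\xi$, which is orthogonal to $\psi_x$ and to $N$. Differentiating $\xi$ gives $\xi_x=-\kappa\psi_x$ (up to the sign convention for $\kappa$), so $S\partial_x=\kappa\partial_x$ and $\mu=\kappa$. Along $N$ we get
\[
d\xi(E_i^h)=-(\phi'+\varphi'\hat S)E_i .
\]
Hence on the horizontal part $S$ is conjugate, via $A=\varphi-\phi\hat S$, to
\[
A^{-1}(\phi'+\varphi'\hat S).
\]
This is a rational function of the Jordan block $\hat S=\lambda_0+\hat N$, where $\hat N$ is nilpotent of index $3$. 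Its eigenvalue is $(\phi'+\lambda_0\varphi')/(\varphi-\lambda_0\phi)$ up to sign, which matches the stated $\lambda$. Its nilpotent part is a power series in $\hat N$ whose coefficient of $\hat N$ is, by direct expansion, proportional to $(\varphi\varphi'+\phi\phi')/(\varphi-\lambda_0\phi)^2$.

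The operator keeps the full Jordan type with $\dim$ eigenspace one (Case (iii)) iff the linear coefficient is nonzero. This gives condition \eqref{eq2:prop:warpedproduct1:ex:Warped}. When it fails, the nilpotent part is proportional to $\hat N^2$, and we land in Case (ii) instead.

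The remaining, and main, obstacle is (iv): producing explicitly the pseudo-orthonormal frame that puts $S$ in the form \eqref{eq:shape}. The plan is to write $S|_{\mathrm{hor}}=\lambda+aN_1+bN_1^2$ in the basis $E_i^h$. The frame $e_2,e_3,e_4$ is then sought as upper-triangular combinations of $E_1^h,E_2^h,E_3^h$, with undetermined coefficients fixed by three requirements: $e_2\in\ker(S-\lambda)$, the chain relations $(S-\lambda)e_4=e_2$ and $(S-\lambda)e_3=-e_4$, and the normalizations $\langle e_2,e_3\rangle=-1$, $\langle e_4,e_4\rangle=1$, $\langle e_3,e_3\rangle=\langle e_4,e_2\rangle=\langle e_3,e_4\rangle=0$, computed with $g$ above. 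These are the same normal-form computations as in Step 4 of Lemma~\ref{lem:intrinsic}, and they are routine but messy; solving them yields the stated formulas. Finally one verifies $\lambda-\mu\neq0$ where needed. For the remaining items, and for (A4), note that $\nabla_{\partial_x}\partial_x=0$ because $x$-curves are the curves $\alpha$ on the flat $2$-planes spanned by $\hat\psi,\hat\xi$ and are parametrized by arc length.
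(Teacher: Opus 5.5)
Your proposal follows essentially the paper's own route: compute $d\psi=\varphi\,\mathrm{Id}-\phi\hat S$ on $N$ together with $\psi_x$, then the induced metric and the shape operator in the frame $\{\partial_x,E_1^h,E_2^h,E_3^h\}$, and read off (i)--(iv). Your expansion $A^{-1}(\phi'+\varphi'\hat S)=\frac{\phi'+\lambda_0\varphi'}{\varphi-\lambda_0\phi}+\frac{\varphi\varphi'+\phi\phi'}{(\varphi-\lambda_0\phi)^2}\hat N+\frac{\phi(\varphi\varphi'+\phi\phi')}{(\varphi-\lambda_0\phi)^3}\hat N^2$ is a clean way to organize the paper's matrix \eqref{eq4:prop:warpedproduct1:ex:Warped}, and the frame stated in (iv) can simply be checked by substitution against the metric matrix rather than solved for.

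There is one correction, and it does not affect the equivalence in (iii). The $\hat N^2$-coefficient also carries the factor $\varphi\varphi'+\phi\phi'$, so where \eqref{eq2:prop:warpedproduct1:ex:Warped} fails, $S$ is $\lambda\,\mathrm{Id}$ on the horizontal part and is diagonalizable, not of Case (ii) type. You should also fix a single normal, for instance $\phi'\hat\psi-\varphi'\hat\xi$, together with a sign convention for $\kappa$. Flipping the normal changes the signs of $\mu$ and $\lambda$ together, so only one choice gives both $\mu=\kappa$ and the stated $\lambda$.
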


\begin{proof}
By a direct computation using \eqref{ex:Warped:PosVect}, we obtain 
$$d\psi=\varphi\mathrm{\,id} -\phi\hat S$$
whenever $X$ is tangent to $N$. Therefore, the metric $g$ has the matrix representation
\begin{equation}\label{eq3:prop:warpedproduct1:ex:Warped}
g\sim\left(
\begin{array}{cccc}
 1 & 0 & 0 & 0 \\
 0 & 0 & -\left(\varphi  -\lambda _0 \phi  \right){}^2 & 0 \\
 0 & -\left(\varphi  -\lambda _0 \phi  \right){}^2 & \phi  ^2 & 2 \phi   \left(\varphi  -\lambda _0 \phi  \right) \\
 0 & 0 & 2 \phi   \left(\varphi  -\lambda _0 \phi  \right) & \left(\varphi  -\lambda _0 \phi  \right){}^2 \\
\end{array}
\right)
\end{equation}
with respect to $\{\partial_x,E_1^h,E_2^h,E_3^h\}$. So, we have (i) of the proposition. By a further computation using \eqref{eq3:prop:warpedproduct1:ex:Warped}, we get (ii) of the proposition.

When the mapping $\psi$ is an isometric immersion, its shape operator $S$ has the matrix representation
\begin{equation}\label{eq4:prop:warpedproduct1:ex:Warped}
S\sim\left(\begin{array}{cccc}
\kappa  & 0 & 0 & 0 \\
 0 & \frac{\phi ' +\lambda _0 \varphi ' }{\lambda _0 \phi  -\varphi  } & \frac{\phi   \left(\phi   \phi ' +\varphi   \varphi ' \right)}{\left(\varphi  -\lambda _0 \phi  \right){}^3} & -\frac{\phi   \phi ' +\varphi   \varphi ' }{\left(\varphi  -\lambda _0 \phi  \right){}^2} \\
 0 & 0 & \frac{\phi ' +\lambda _0 \varphi ' }{\lambda _0 \phi  -\varphi  } & 0 \\
 0 & 0 & \frac{\phi   \phi ' +\varphi   \varphi ' }{\left(\varphi  -\lambda _0 \phi  \right){}^2} & \frac{\phi ' +\lambda _0 \varphi ' }{\lambda _0 \phi  -\varphi  } \\
\end{array}
\right)
\end{equation}
 with respect to $\{\partial_x,E_1^h,E_2^h,E_3^h\}$. By combining \eqref{eq3:prop:warpedproduct1:ex:Warped} and \eqref{eq4:prop:warpedproduct1:ex:Warped}, we get (iii) and (iv) of the proposition.
\end{proof}

Next, we define two distributions $D=\mathrm{span}\{e_1,e_4\}$ and $D^\perp=\mathrm{span}\{e_2,e_3\}$ and obtain the following proposition.
\begin{proposition}\label{prop:warped}
Let $M$ be a hypersurface satisfying the assumptions given in Remark~\ref{rem:assumptions}. Then the following conditions are equivalent:
\begin{enumerate}
\item[(i)] $M$ is locally a warped product $I\times_\varphi N^3_1$ with $\partial_x=e_1$,
\item[(ii)] the distribution $D$ is involutive and $\omega_{13}(e_3)=0$,
\item[(iii)] $\omega_{13}(e_3)=\omega_{13}(e_4)=\omega_{34}(e_1)=0$.
\end{enumerate}
In this case, we have
\begin{equation}\label{eq:prop:warped}
\frac{\varphi'}{\varphi}=\omega_{23}(e_1)=-\frac{\lambda'}{\lambda-\mu}.
\end{equation}
\end{proposition}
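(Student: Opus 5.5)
We prove (i)$\Rightarrow$(iii)$\Rightarrow$(ii)$\Rightarrow$(i). The tools are the connection forms from Lemma~\ref{lem:ConnForms}, the brackets in Corollary~\ref{cor:lie-brackets}, and the standard characterization of a warped product $I\times_\varphi N$ with $\partial_x=e_1$. That characterization says $e_1$ is a unit geodesic field, $e_1^\perp=\operatorname{span}\{e_2,e_3,e_4\}$ is involutive, and $e_1^\perp$ is totally umbilical with mean curvature vector $-(\varphi'/\varphi)e_1$. Equivalently, $\nabla_Ye_1=(\varphi'/\varphi)Y$ for all $Y\perp e_1$.

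\textbf{From (i) to (iii).} Using \eqref{eq:nabla11}, $\nabla_{e_a}e_1=-\omega_{13}(e_a)e_2-\omega_{12}(e_a)e_3+\omega_{14}(e_a)e_4$. Imposing $\nabla_{e_a}e_1=\sigma e_a$ for $a=2,3,4$ with $\sigma=\varphi'/\varphi$ gives, componentwise:
\begin{itemize}
\item $a=2$: $\omega_{13}(e_2)=-\sigma$, and $\omega_{12}(e_2)=\omega_{14}(e_2)=0$, which already hold by \eqref{eq:wzero2};
\item $a=3$: $\omega_{12}(e_3)=-\sigma$, $\omega_{13}(e_3)=0$ and $\omega_{14}(e_3)=0$;
\item $a=4$: $\omega_{13}(e_4)=0$ and $\omega_{14}(e_4)=\sigma$.
\end{itemize}
Substituting $\omega_{13}(e_3)=0$ into \eqref{eq:wrel2} gives $\omega_{14}(e_3)=2\omega_{34}(e_1)$, so $\omega_{34}(e_1)=0$. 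This proves (iii).

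As a byproduct, \eqref{eq:wrel3} gives $\sigma=-\omega_{12}(e_3)=\omega_{23}(e_1)$. Then \eqref{eq:e1lambda} reduces to $e_1(\lambda)=-(\lambda-\mu)\omega_{23}(e_1)$, which is \eqref{eq:prop:warped} with $\lambda'=e_1(\lambda)$, since $\lambda$ depends only on $x$ by \eqref{eq:muconst}.

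\textbf{From (iii) to (ii).} If $\omega_{13}(e_3)=\omega_{13}(e_4)=\omega_{34}(e_1)=0$, then \eqref{eq:wrel2} gives $\omega_{14}(e_3)=0$. Hence the $e_2$ and $e_3$ components of $[e_1,e_4]$ in Corollary~\ref{cor:lie-brackets} vanish, so $[e_1,e_4]\in D$ and $D$ is involutive.

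\textbf{From (ii) to (i).} Involutivity of $D$ forces both coefficients in $[e_1,e_4]$ outside $D$ to vanish:
\[
\omega_{14}(e_3)+\omega_{34}(e_1)=0,\qquad (\mu-\lambda)\omega_{14}(e_3)=0 .
\]
Since $\lambda\neq\mu$, we get $\omega_{14}(e_3)=0=\omega_{34}(e_1)$. Together with $\omega_{13}(e_3)=0$, \eqref{eq:wrel2} then gives $\omega_{13}(e_4)$ expressed through $\omega_{14}(e_4)$. This is the delicate point: to recover $\omega_{13}(e_4)=0$ I expect to need \eqref{eq:wrel4} together with one further Gauss-equation component, such as $R(e_1,e_4,e_1,e_3)$ computed from the brackets. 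If $\omega_{13}(e_4)\neq0$ is not excluded this way, I would reinterpret (ii) as already implying (iii) by that curvature identity.

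Once (iii) holds, \eqref{eq:wrel3} and \eqref{eq:wrel2} give
\[
\omega_{12}(e_3)=\omega_{13}(e_2)=-\omega_{23}(e_1),\qquad \omega_{14}(e_4)=\omega_{23}(e_1),
\]
so $\nabla_Ye_1=\omega_{23}(e_1)Y$ for $Y\perp e_1$. The brackets of Corollary~\ref{cor:lie-brackets} show that $e_1^\perp$ is involutive, since $[e_a,e_b]$ has no $e_1$-component. Moreover, $\omega_{23}(e_1)$ depends only on $x$: it equals $-e_1(\lambda)/(\lambda-\mu)$, and $e_a(\lambda)=e_a(\mu)=0$ for $a=2,3,4$. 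Hiepko's criterion then gives the local warped product $I\times_\varphi N$ with $\varphi'/\varphi=\omega_{23}(e_1)$.

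\textbf{Main obstacle.} The main obstacle is the step (ii)$\Rightarrow$(iii), specifically forcing $\omega_{13}(e_4)=0$ from the integrability data. I would attack it first via the Gauss equation for $R(e_1,e_4)e_1$, comparing the connection-form expression with the value $-\mu\lambda e_4$ given by \eqref{eq:second-fundamental-form}.
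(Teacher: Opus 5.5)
There is a genuine gap exactly where you flagged it. In (ii)$\Rightarrow$(i) you never obtain $\omega_{13}(e_4)=0$. The detour through the Gauss equation for $R(e_1,e_4)e_1$ is only a hope: it brings in $e_1$-derivatives of connection forms, and nothing indicates it forces an algebraic vanishing. ``Reinterpreting (ii) as implying (iii)'' is not an argument.

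The difficulty comes from taking $[e_1,e_4]$ from the printed Corollary~\ref{cor:lie-brackets}, which is not what \eqref{eq:nabla11} gives. Computing directly, with $\omega_{14}(e_1)=\omega_{12}(e_4)=0$,
\[
[e_1,e_4]=\nabla_{e_1}e_4-\nabla_{e_4}e_1=\bigl(\omega_{13}(e_4)+\omega_{34}(e_1)\bigr)e_2+\omega_{24}(e_1)\,e_3-\omega_{14}(e_4)\,e_4 .
\]
The printed corollary is off elsewhere too. For example, in the warped case its $[e_1,e_2]$ would be $-3\omega_{23}(e_1)e_2$, contradicting \eqref{eq02a:Warped}. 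With the correct bracket, involutivity of $D$ gives $\omega_{13}(e_4)=-\omega_{34}(e_1)$ and $\omega_{24}(e_1)=0$. Since $\omega_{13}(e_3)=0$ and $\lambda\neq\mu$, \eqref{eq:wrel4} turns the second relation into $\omega_{13}(e_4)=2\omega_{34}(e_1)$. Hence $\omega_{13}(e_4)=\omega_{34}(e_1)=0$, which is (iii). This is how the paper closes the step, and no curvature identity is needed.

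Two smaller repairs are also needed.

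In (iii)$\Rightarrow$(ii) your conclusion is correct, but the $e_3$-component of $[e_1,e_4]$ is $\omega_{24}(e_1)$, not the corollary's $(\mu-\lambda)\omega_{14}(e_3)$. It vanishes under (iii) by \eqref{eq:wrel4}, and you should argue from the direct bracket.

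In the final step, $e_a(\lambda)=e_a(\mu)=0$ does not by itself give $e_a\bigl(\omega_{23}(e_1)\bigr)=0$. You need $e_a\bigl(e_1(\lambda)\bigr)=[e_a,e_1](\lambda)=0$. This holds because $\langle[e_a,e_1],e_1\rangle=\langle e_a,\nabla_{e_1}e_1\rangle=0$ by (A4), and $d\lambda$ only sees the $e_1$-component. Together with involutivity of $e_1^\perp$, the same fact shows that the $1$-form dual to $e_1$ is closed. So $e_1=\nabla x$ locally, and only then does Hiepko's theorem apply with $\varphi'/\varphi=\omega_{23}(e_1)$.
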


\begin{proof}
(i)$\Rightarrow$(iii). If $M=I\times_\varphi N^3_1$ locally with $\partial_x=e_1$, then $\nabla_X e_1=\frac{\varphi'}{\varphi}X$ whenever $X$ is tangent to $N^3_1$. By comparing this equation for $X=e_2,e_3,e_4$ with \eqref{eq:nabla11}, we get $\omega_{13}(e_3)=\omega_{14}(e_3)=\omega_{13}(e_4)=0$ and $\omega_{14}(e_4)=\varphi'/\varphi$. Then, Lemma~\ref{lem:ConnForms} gives $\omega_{34}(e_1)=0$.

(iii)$\Rightarrow$(ii). Because of Lemma~\ref{lem:ConnForms}, (iii) implies $\omega_{24}(e_1)=0$. Therefore, \eqref{eq:cor:lie-brackets} gives $[e_1,e_4]=-\omega_{23}(e_1) e_4\in D$.

(ii)$\Rightarrow$(i). Since $D$ is involutive, \eqref{eq:cor:lie-brackets} implies $\omega_{13}(e_4)+\omega_{34}(e_1)=0$ and $\omega_{24}(e_1)=0$. By combining these equations with $\omega_{13}(e_3)=0$ and \eqref{eq:wrel4}, we obtain (iii). Consequently, Lemma~\ref{lem:ConnForms} gives
\begin{equation*}
\omega_{14}(e_3)=\omega_{24}(e_1)=0,\qquad \omega_{14}(e_4)=-\omega_{12}(e_3)=-\omega_{13}(e_2)=\omega_{23}(e_1),
\end{equation*}
and \eqref{eq:cor:lie-brackets} shows that none of the Lie brackets $[e_i,e_j]$ has an $e_1$-component. Therefore, the 1-form dual to $e_1$ is closed, i.e., $e_1=\nabla x$ locally for a function $x$. Moreover, by \eqref{eq:muconst} we have
$$
e_a\big(e_1(\lambda)\big)=[e_a,e_1](\lambda)=0,\qquad a=2,3,4.
$$
Since $e_1(\lambda)=-(\lambda-\mu) \omega_{23}(e_1)$ by \eqref{eq:e1lambda} and $e_a(\lambda-\mu)=0$, we get $e_a\big(\omega_{23}(e_1)\big)=0$. Hence, $\theta:=\omega_{23}(e_1)$ depends only on $x$, and
$$
\nabla_X e_1=\theta(x)\big(X-\langle X,e_1\rangle e_1\big)
$$
for any $X$ tangent to $M$. Hence, $e_1$ spans a totally geodesic foliation, and the level hypersurfaces of $x$ are totally umbilical with the parallel mean curvature vector $-\theta e_1$. Therefore, $M$ is locally a warped product $I\times_\varphi N^3_1$ with $\varphi'/\varphi=\theta$  by Hiepko's theorem \cite{Hiepko1979} (see also \cite{PongeReckziegel1993}). Finally, the second equality in \eqref{eq:prop:warped} follows from \eqref{eq:e1lambda}, because $\lambda=\lambda(x)$. 
\end{proof}

\begin{remark}
The assumption (A4) is not needed in the remaining part of this section because if $M$ is locally a warped product $I\times_\varphi N^3_1$ with $e_1=\partial_x$, then the leaves $I\times\{q\}$,
$q\in N^3_1$, are totally geodesic in $M$. Hence the integral curves of $e_1$ are geodesics.
\end{remark}

When $M$ is locally a warped product $I\times_\varphi N^3_1$ as in Proposition~\ref{prop:warped}, \eqref{eq:cor:lie-brackets} reduces to
\begin{subequations}\label{eq02:Warped}
\begin{align}
\label{eq02a:Warped} [e_1,e_2] &= -2 \frac{\varphi'}{\varphi} e_2, \\
\label{eq02b:Warped} [e_1,e_3] &= 0, \\
\label{eq02c:Warped} [e_1,e_4] &= -\frac{\varphi'}{\varphi} e_4, \\
\label{eq02d:Warped} [e_2,e_3] &= 2 \omega_{34}(e_4) e_2-\omega_{34}(e_2) e_4, \\
\label{eq02e:Warped} [e_2,e_4] &= 5 \omega_{34}(e_2) e_2, \\
\label{eq02f:Warped} [e_3,e_4] &= \omega_{34}(e_3) e_2-2 \omega_{34}(e_2) e_3-\omega_{34}(e_4) e_4.
\end{align}
\end{subequations}
Moreover, because of Lemma~\ref{lem:ConnForms} and \eqref{eq:nabla11}, the Levi-Civita connection of $M$ satisfies
\begin{subequations}\label{eq01:Warped}
\begin{align}
\nabla_{e_1}e_1 &= 0, \quad \nabla_{e_1}e_2 = -\frac{\varphi'}{\varphi} e_2, \quad \nabla_{e_1}e_3 = \frac{\varphi'}{\varphi} e_3, \quad \nabla_{e_1}e_4 = 0, \\
\nabla_{e_2}e_1 &= \frac{\varphi'}{\varphi} e_2, \quad \nabla_{e_2}e_2 = 0, \quad \nabla_{e_2}e_3 = \frac{\varphi'}{\varphi} e_1+\omega_{34}(e_2) e_4, \quad \nabla_{e_2}e_4 = \omega_{34}(e_2) e_2, \\
\nabla_{e_3}e_1 &= \frac{\varphi'}{\varphi} e_3, \quad \nabla_{e_3}e_2 = \frac{\varphi'}{\varphi} e_1-2 \omega_{34}(e_4) e_2+2 \omega_{34}(e_2) e_4, \\
\nabla_{e_3}e_3 &= 2 \omega_{34}(e_4) e_3+\omega_{34}(e_3) e_4, \quad \nabla_{e_3}e_4 = \omega_{34}(e_3) e_2+2 \omega_{34}(e_2) e_3, \\
\nabla_{e_4}e_1 &= \frac{\varphi'}{\varphi} e_4, \quad \nabla_{e_4}e_2 = -4 \omega_{34}(e_2) e_2, \\
\nabla_{e_4}e_3 &= 4 \omega_{34}(e_2) e_3+\omega_{34}(e_4) e_4, \quad \nabla_{e_4}e_4 = -\frac{\varphi'}{\varphi} e_1+\omega_{34}(e_4) e_2.
\end{align}
\end{subequations}

In the following proposition, we obtain an adapted coordinate system for the pseudo-orthonormal frame field $\{e_1,e_2,e_3,e_4\}$ in this case.

\begin{proposition}\label{prop:adapted-frame}
Let $M$ be a hypersurface satisfying the assumptions given in Remark~\ref{rem:assumptions} and $p\in M$. If $M$ is locally a warped product $I\times_\varphi N^3_1$ as in Proposition~\ref{prop:warped}, then there exists a local coordinate system $(x,u,y,v)$ defined on a neighborhood $\mathcal N_p$ of $p$ in which
\begin{equation}\label{eq:prop:adapted-frame}
\begin{split}
e_1 &= \partial_x, \qquad\qquad\qquad\qquad\quad\ \ e_2 = \varphi(x)^{-2}\rho_1(u,y,v) \partial_u, \\
e_3 &= \rho_2 \partial_u+\rho_3 \partial_y+\rho_4(y,v) \partial_v, \qquad e_4 = \varphi(x)^{-1}\rho_5(y,v) \partial_y,
\end{split}
\end{equation}
for some smooth functions $\rho_1,\rho_2,\rho_3$ depending only on $(u,y,v)$ and $\rho_4,\rho_5$ depending only on $(y,v)$, where $\rho_1$, $\rho_4$ and $\rho_5$ are nowhere zero.
\end{proposition}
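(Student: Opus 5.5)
We will build the coordinates from the Lie bracket relations \eqref{eq02:Warped}, using the Frobenius theorem step by step. The goal is to simplify one frame vector at a time.

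\emph{Step 1: the $e_1$ direction.} Since $M$ is locally $I\times_\varphi N^3_1$ with $e_1=\partial_x$, we may take $x$ as the first coordinate. Any coordinate system $(u,y,v)$ on $N^3_1$, lifted to $M$, then gives $e_1=\partial_x$. By \eqref{eq02a:Warped}, \eqref{eq02b:Warped} and \eqref{eq02c:Warped}, the rescaled fields
\[
\hat e_2=\varphi^2e_2,\qquad \hat e_3=e_3,\qquad \hat e_4=\varphi e_4
\]
commute with $\partial_x$. For example, $[\partial_x,\varphi^2e_2]=2\varphi\varphi'e_2+\varphi^2(-2\tfrac{\varphi'}{\varphi})e_2=0$. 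So $\hat e_2,\hat e_3,\hat e_4$ are tangent to $N$ and independent of $x$, i.e., they are projectable vector fields on $N^3_1$. This already produces the factors $\varphi^{-2}$ and $\varphi^{-1}$ in \eqref{eq:prop:adapted-frame}, and it shows that every coefficient is independent of $x$.

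\emph{Step 2: coordinates on $N$.} It remains to find coordinates $(u,y,v)$ on $N$ in which
\[
\hat e_2=\rho_1\partial_u,\qquad \hat e_4=\rho_5(y,v)\partial_y,\qquad \hat e_3=\rho_2\partial_u+\rho_3\partial_y+\rho_4(y,v)\partial_v .
\]
From \eqref{eq02e:Warped}, $\mathrm{span}\{e_2,e_4\}$ is involutive, and from \eqref{eq02d:Warped}, $\mathrm{span}\{e_2,e_3,e_4\}$ is trivially involutive. The line field $\mathrm{span}\{e_2\}$ is integrable. Its coordinate $u$ is a parameter along the $e_2$-curves, and $y,v$ are functions constant along $e_2$, that is, first integrals of $\hat e_2$. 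Thus we need functions $y,v$ with $\hat e_2(y)=\hat e_2(v)=0$, with $v$ additionally annihilated by $\hat e_4$.

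Since $\mathrm{span}\{\hat e_2,\hat e_4\}$ is involutive on the 3-manifold $N$, Frobenius gives a function $v$ with $dv\neq0$ and $\hat e_2(v)=\hat e_4(v)=0$. Next, $[\hat e_2,\hat e_4]\in\mathrm{span}\{\hat e_2\}$ by \eqref{eq02e:Warped}, so $\hat e_4$ preserves the foliation by $\hat e_2$-curves. It therefore descends to a field on the local quotient 2-manifold with coordinates given by the first integrals of $\hat e_2$. On that quotient we choose $y$ so that the descended field is $\rho_5(y,v)\partial_y$, via flow-box coordinates adapted to $v$. Finally we complete with $u$, a coordinate along $\hat e_2$ that is constant on a transversal hypersurface.

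In the coordinates $(u,y,v)$ we then get $\hat e_2=\rho_1\partial_u$ and $\hat e_4=a\,\partial_u+\rho_5(y,v)\partial_y$. The coefficients $\hat e_3(y)$ and $\hat e_3(v)$ give $\rho_3$ and $\rho_4$. Since $\hat e_3(v)$ is a function on $N$, we need $\hat e_2(\hat e_3 v)=0$ and $\hat e_4(\hat e_3 v)=0$ to conclude $\rho_4=\rho_4(y,v)$. These follow from $[\hat e_2,\hat e_3],[\hat e_4,\hat e_3]\in\mathrm{span}\{e_2,e_3,e_4\}$, with the $e_3$-component of $[e_3,e_4]$ being $-2\omega_{34}(e_2)e_3$ by \eqref{eq02f:Warped}. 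This may require rescaling $v$ by a function of $v$, or choosing $v$ via an integrating factor.

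\emph{Main obstacle.} The delicate point is removing the $\partial_u$ component of $\hat e_4$ while simultaneously keeping $\rho_5$ and $\rho_4$ independent of $u$. The component $\partial_u$ can be eliminated by choosing the transversal for $u$ to be $\hat e_4$-invariant. However, the relation $[\hat e_2,\hat e_4]=5\omega_{34}(e_2)\hat e_2$ only makes $\hat e_4$ projectable modulo $\hat e_2$, not commuting with it. I would first try choosing $u$ with $\hat e_4(u)=0$, which is possible because $\hat e_2,\hat e_4$ span an involutive plane: take $u$ constant along $\hat e_4$-curves within each leaf $v=\mathrm{const}$. Then I would verify via the brackets that the coefficients $\rho_5$ and $\rho_4$ satisfy $\partial_u\rho_5=0$ and $\partial_u\rho_4=0$, using the structure of \eqref{eq02:Warped}. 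If this fails, I would weaken the choices by allowing reparametrizations $y\mapsto Y(y,v)$ and $v\mapsto V(v)$.

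Nonvanishing of $\rho_1$, $\rho_4$ and $\rho_5$ follows from the linear independence of the frame.
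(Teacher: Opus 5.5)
Your Step 1 is the paper's first step, and the remedy you reach under ``Main obstacle'' (choose $u$ with $\hat e_4(u)=0$) is exactly the paper's choice of coordinates. As written, however, the argument is not closed. The construction you actually carry out, with $u$ constant on a transversal, leaves $\hat e_4=a\,\partial_u+\rho_5\partial_y$, which is not the claimed form. The corrected choice is only proposed, with its verification deferred and a fallback ``if this fails.''

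The verification is short and belongs in the proof. Take $v$ with $\hat e_2(v)=\hat e_4(v)=0$, $u$ with $\hat e_4(u)=0\neq\hat e_2(u)$, and any $y$ with $\hat e_2(y)=0\neq\hat e_4(y)$; no flow-box normalisation of $y$ is needed. Evaluating $(du,dy,dv)$ on $(\hat e_2,\hat e_4,\hat e_3)$ gives a triangular matrix with diagonal $\hat e_2(u),\hat e_4(y),\hat e_3(v)$, all nonzero. So $(u,y,v)$ are coordinates with $\hat e_2=\rho_1\partial_u$, $\hat e_4=\rho_5\partial_y$ and $\rho_4=\hat e_3(v)$, and $\rho_1,\rho_4,\rho_5$ are nowhere zero by construction. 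By \eqref{eq02e:Warped}, $[\hat e_2,\hat e_4]=5\varphi\,\omega_{34}(e_2)\hat e_2$ has no $\partial_y$-component, so $\rho_1\partial_u\rho_5=0$. By \eqref{eq02d:Warped}, $[\hat e_2,\hat e_3]\in\mathrm{span}\{e_2,e_4\}$ annihilates $v$, so $\hat e_2(\rho_4)=\rho_1\partial_u\rho_4=0$. That is all the statement requires.

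Separately, your claim that one needs $\hat e_4(\hat e_3 v)=0$ to get $\rho_4=\rho_4(y,v)$ is wrong on two counts. First, it is not needed: independence from $u$ only requires $\hat e_2(\rho_4)=0$, as above, and the statement explicitly allows $\rho_4$ to depend on $y$. Second, it is false in general. By \eqref{eq02f:Warped}, $\hat e_4(\rho_4)=[\hat e_4,\hat e_3](v)=2\varphi\,\omega_{34}(e_2)\rho_4$, which need not vanish. Replacing $v$ by $V(v)$ only multiplies this by $V'(v)$, so neither a rescaling nor an integrating factor can achieve it. Drop that part of the argument.
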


\begin{proof}
By Proposition~\ref{prop:warped}, $\mathcal N_p$ can be chosen so that $\mathcal N_p=I\times N$ and $e_1=\partial_x$, where $x$ is the coordinate on $I$. Since $e_1(x)=1$ and $e_a(x)=0$, the vector fields $e_2,e_3,e_4$ are tangent to the slices $\{x\}\times N$. Moreover, \eqref{eq02a:Warped}, \eqref{eq02b:Warped} and \eqref{eq02c:Warped} imply that
$$E_2:=\varphi^2 e_2, \qquad E_3:=e_3, \qquad E_4:=\varphi  e_4$$
commute with $\partial_x$. Therefore, $E_2,E_3,E_4$ are the lifts of vector fields on $N$, which we denote by the same letters.

Because of \eqref{eq02e:Warped}, the distribution $\mathrm{span}\{E_2,E_4\}$ on $N$ is involutive. Therefore, there exists a function $v$ on $N$ with $E_2(v)=E_4(v)=0$ and $E_3(v)\neq0$. Furthermore, there exist functions $u$ and $y$ on $N$ satisfying $E_4(u)=0$, $E_2(u)\neq0$ and $E_2(y)=0$, $E_4(y)\neq0$. Then $(u,y,v)$ is a local coordinate system on $N$, and
$$E_2=\rho_1 \partial_u,\qquad E_3=\rho_2 \partial_u+\rho_3 \partial_y+\rho_4 \partial_v,\qquad E_4=\rho_5 \partial_y$$
for some smooth functions $\rho_1,\dots,\rho_5$ of $(u,y,v)$ with $\rho_1\rho_4\rho_5\neq0$. The $\partial_y$-component of \eqref{eq02e:Warped} gives $\partial_u\rho_5=0$, and the $\partial_v$-component of \eqref{eq02d:Warped} gives $\partial_u\rho_4=0$. Hence, we obtain \eqref{eq:prop:adapted-frame}.
\end{proof}

Next, we describe the fibers $\{x_0\}\times N^3_1$ of a warped product hypersurface $I\times_\varphi N^3_1$ satisfying the assumptions given in Remark~\ref{rem:assumptions}.

\begin{proposition}\label{prop:leaves}
Let $M$ be a hypersurface satisfying the assumptions given in Remark~\ref{rem:assumptions}. If $M$ is locally a warped product $I\times_\varphi N^3_1$ as in Proposition~\ref{prop:warped}, then for any $x_0\in I$, there exist $c\in\mathbb E^5_1,\ r>0$ and $\lambda_0\in\mathbb R$ such that the slice $\{x_0\}\times N^3_1$ lies in the de Sitter space
$$
\mathbb S^4_1(1/r^2,c)=\{q\in\mathbb E^5_1:\ \langle q-c,q-c\rangle=r^2\}
$$
and is an isoparametric hypersurface of $\mathbb S^4_1(1/r^2,c)$ whose shape operator
$\hat S$ takes the form
\begin{equation}\label{eq:shape:hatM}
\hat S \sim
\begin{pmatrix}
\lambda_0 & 0 & 1 \\
0 & \lambda_0 & 0 \\
0 & -1 & \lambda_0
\end{pmatrix}
\end{equation}
with respect to a suitable pseudo-orthonormal frame.
\end{proposition}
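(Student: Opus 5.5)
Fix $x_0\in I$ and let $N_0=\{x_0\}\times N^3_1\subset M$ with position vector $\psi$. The plan is to show that $q\mapsto\psi(q)+\frac{1}{\theta_0}e_1(q)$ is constant on $N_0$, where $\theta_0=\varphi'(x_0)/\varphi(x_0)$. This gives the center $c$ and shows that $N_0$ lies in the pseudo-sphere of radius $r=1/|\theta_0|$ (note $e_1$ is spacelike unit, so $\langle\psi-c,\psi-c\rangle=1/\theta_0^2>0$, a de Sitter space). Then I will compute the shape operator of $N_0$ inside this de Sitter space.

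For $X\in\{e_2,e_3,e_4\}$ tangent to $N_0$, the Gauss and Weingarten formulas together with \eqref{eq01:Warped} give
\[
\bar\nabla_X e_1=\nabla_Xe_1+h(X,e_1)\xi=\theta_0X,
\]
because $h(e_a,e_1)=0$ by \eqref{eq:second-fundamental-form}. Hence $X(\psi-\tfrac{1}{\theta_0}\cdot(-e_1))$ vanishes, i.e. $X(\psi+\tfrac1{\theta_0}e_1)=X-X=0$ up to fixing the sign. I have to check this sign carefully, so I take $c=\psi-\tfrac{1}{\theta_0}e_1$ if needed. Since $N_0$ is connected, $c$ is constant.

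Here I need $\theta_0\neq0$. If $\varphi'(x_0)=0$, then by \eqref{eq:prop:warped} $\lambda'(x_0)=0$, and the slice is instead a piece of a hyperplane. I expect this degenerate case is either excluded or handled separately, for instance by perturbing $x_0$ or by treating it as the $r=\infty$ limit. I would first try to argue that the conclusion is meant for generic $x_0$, or that $\lambda'\neq0$ follows from (A3) together with $\lambda-\mu\neq0$. This is the main obstacle.

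Next I compute the shape operator. The normal space of $N_0$ in $\mathbb E^5_1$ is spanned by $e_1$ and $\xi$. The unit normal of the de Sitter space at $\psi$ is $\pm e_1$, so $\xi$ is the unit normal of $N_0$ within $\mathbb S^4_1(1/r^2,c)$. The shape operator $\hat S$ of $N_0$ in $\mathbb S^4_1$ relative to $\xi$ is the tangential part of $-\bar\nabla_X\xi$, which equals $S$ restricted to $\operatorname{span}\{e_2,e_3,e_4\}$. From \eqref{eq:shape} this restriction is
\[
\begin{pmatrix}\lambda&0&1\\0&\lambda&0\\0&-1&\lambda\end{pmatrix}
\]
in the frame $\{e_2,e_3,e_4\}$, and this frame is pseudo-orthonormal for the induced Lorentzian metric by \eqref{eq:shape0}.

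Finally, by \eqref{eq:muconst}, $\lambda$ is constant on $N_0$; set $\lambda_0=\lambda(x_0)$. The Jordan form of $\hat S$ therefore has constant eigenvalue and constant structure, so $N_0$ is isoparametric in $\mathbb S^4_1(1/r^2,c)$ with $\hat S$ of the form \eqref{eq:shape:hatM}.
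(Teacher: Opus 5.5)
Your route is the paper's: it also shows that $\hat\psi-r\,e_1$ is constant on the slice (using $\nabla_{e_a}e_1=\frac{\varphi'}{\varphi}e_a$ from \eqref{eq01:Warped} and $h(e_a,e_1)=0$). It then takes $\xi$ as the unit normal of the slice inside $\mathbb S^4_1(1/r^2,c)$, and reads off $\hat S$ as the restriction of $S$ to $\operatorname{span}\{e_2,e_3,e_4\}$ with $\lambda_0=\lambda(x_0)$. Your first sign is wrong, though. From $\bar\nabla_Xe_1=\theta_0X$ you get $X(\psi+\theta_0^{-1}e_1)=2X$, so the center is $c=\psi-\theta_0^{-1}e_1$. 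This is your fallback, and it matches the paper's $\hat\psi-r\hat e_4$ with $1/r=\theta_0$. Your choice $r=1/|\theta_0|$ is slightly more careful than the paper's, which tacitly needs $\theta_0>0$.

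On $\theta_0\neq0$: the paper does not treat this either. It simply sets $1/r=\varphi'(x_0)/\varphi(x_0)$, so you are not missing an idea that the paper supplies. However, the remedies you suggest do not work:
\begin{itemize}
\item The statement is for every $x_0$, so an argument for generic $x_0$, or one that perturbs $x_0$, proves less than claimed.
\item (A3) together with $\lambda-\mu\neq0$ says nothing about $\lambda'$. By \eqref{eq:prop:warped}, $\lambda'=-(\lambda-\mu)\theta$, so $\lambda'(x_0)\neq0$ is equivalent to $\theta_0\neq0$.
\end{itemize}
What one can actually show is a reduction. From \eqref{eq01:Warped} one computes $R(e_3,e_1)e_1=-\frac{\varphi''}{\varphi}e_3$, while the Gauss equation gives $\mu(\lambda e_3-e_4)$. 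Comparing the two yields $\mu=0$ and $\varphi''=0$. This is Proposition~\ref{prop:warpedproduct1}, whose proof does not use the present statement. Hence $\theta=\varphi'/\varphi$ vanishes either nowhere or identically on $I$. In the second case $\bar\nabla e_1=0$, so $e_1$ is a constant vector and $M$ is a cylinder. Every slice then lies in an affine hyperplane orthogonal to $e_1$, which is exactly your ``$r=\infty$'' situation. This case has to be ruled out separately, or $\varphi'\neq0$ added as a hypothesis; neither your proposal nor the paper does so.
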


\begin{proof}
Let $\psi(x,y,u,v)$ be the position vector of  $M$, $\xi$ be its unit normal vector field and $x_0\in I$. We put $\hat\psi(y,u,v)=\psi(x_0,y,u,v)$ for the position vector of the slice $\hat M=\{x_0\}\times N^3_1$ of $M$ and consider the frame field $\{\hat e_1,\hat e_2,\hat e_3;\hat e_4,\hat e_5\}$ on $\hat M$ defined by
$$\hat e_i=e_{i+1}\left.\right|_{\hat M},\ i=1,2,3,\qquad \hat e_4=e_{1}\left.\right|_{\hat M},\hat e_5=\xi\left.\right|_{\hat M}, $$
where $\hat e_4,\hat e_5$ are normal to $\hat M$. Then, \eqref{eq01:Warped} yields that $\hat e_4$ is parallel in the normal bundle of $\hat M$ and 
$$A_{\hat e_4} \hat e_i=-\frac 1r \hat e_i,$$
where we put $\frac 1r=\frac{\varphi'(x_0)}{\varphi(x_0)}$. Therefore, we have 
$$\widetilde\nabla_{\hat e_i}\left( \hat\psi-r\hat e_4\right)=0,$$
which yields that 
$$\hat\psi=c+r\hat e_4$$
for a constant vector $c\in\mathbb E^5_1$, i.e., $\hat M$ lays on  $\mathbb S^4_1(1/r^2,c)$. Moreover, $\hat e_5$ is the unit normal of $\hat M$ in  $\mathbb S^4_1(1/r^2,c)$. Hence, the shape operator $\hat S$ of $\hat M$ has the matrix representation given by \eqref{eq:shape:hatM} for $\lambda_0=\lambda (x_0).$
\end{proof}

Now, we obtain further geometric properties of a warped product hypersurface $I\times_\varphi N^3_1$ satisfying the assumptions given in Remark~\ref{rem:assumptions}.

\begin{proposition}\label{prop:warpedproduct1}
Let $M$ be a hypersurface satisfying the assumptions given in Remark~\ref{rem:assumptions}.
If $M$ is locally a warped product $I\times_\varphi N^3_1$ as in
Proposition~\ref{prop:warped}, then
\begin{enumerate}
\item[(i)] $\mu=0$,
\item[(ii)] $N^3_1$ has constant scalar curvature,
\item[(iii)] up to a translation of $x$, the warping function is $\varphi(x)=ax$ for a nonzero constant $a$.
\end{enumerate}
\end{proposition}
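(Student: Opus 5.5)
We work in the warped product situation of Proposition~\ref{prop:warped}, so $\omega_{13}(e_3)=\omega_{13}(e_4)=\omega_{34}(e_1)=0$, $\omega_{14}(e_3)=\omega_{24}(e_1)=0$, and $\theta:=\omega_{23}(e_1)=\varphi'/\varphi=-\lambda'/(\lambda-\mu)$. The connection is given explicitly by \eqref{eq01:Warped}. The plan is to compare curvature components computed from \eqref{eq01:Warped} with those prescribed by the Gauss equation \eqref{eq:gauss} and \eqref{eq:second-fundamental-form}.

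\textbf{Step 1: $\mu=0$.} The key components are those involving $e_1$ twice. For a warped product $I\times_\varphi N$ we have
\[
R(X,e_1)e_1=-\frac{\varphi''}{\varphi}X
\]
for $X$ tangent to $N$; this also follows from \eqref{eq01:Warped}. On the other hand, the Gauss equation gives the operator $K(X)=R(X,e_1)e_1$ already used in Lemma~\ref{lem:intrinsic}, namely
\[
K(e_2)=\lambda\mu e_2,\qquad K(e_4)=\mu(\lambda e_4+e_2),\qquad K(e_3)=\mu(\lambda e_3-e_4).
\]
(For $\lambda=0$ this reduces to the formulas of Lemma~\ref{lem:intrinsic}.) Equating the two expressions, the off-diagonal term $\mu e_2$ in $K(e_4)$ must vanish, since the warped-product curvature is a multiple of the identity. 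Hence $\mu=0$, and then $\varphi''=0$ as well. I expect this to be a short computation, and I would double-check the signs against \eqref{eq:gauss}.

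\textbf{Step 2: the warping function.} From $\varphi''=0$ we get $\varphi=ax+b$. Here $a\neq0$: otherwise $\theta=0$, so $\lambda'=0$, so $\lambda$ is constant, and together with $\mu=0$ this makes $M$ isoparametric. That contradicts the structure in Case~(iii), because Proposition~\ref{prop:leaves} requires $\varphi'(x_0)/\varphi(x_0)=1/r\neq0$. After translating $x$ we obtain $\varphi=ax$.

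\textbf{Step 3: $N^3_1$ has constant scalar curvature.} Use the warped product formula
\[
\operatorname{Scal}_M=\frac{\operatorname{Scal}_N}{\varphi^2}-6\frac{\varphi''}{\varphi}-6\frac{\varphi'^2}{\varphi^2},
\]
together with \eqref{eq:lem:RicciOperator:scal}, which gives $\operatorname{Scal}_M=6\lambda^2$ since $\mu=0$. It follows that $\operatorname{Scal}_N=6\lambda^2\varphi^2+6a^2$ is a function of $x$ alone. But $\operatorname{Scal}_N$ lives on $N$, so it is constant. As an alternative route, Proposition~\ref{prop:leaves} shows that each slice is an isoparametric hypersurface of a de Sitter space, and the Gauss equation there gives constant scalar curvature directly.

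\textbf{Main obstacle.} I expect the main difficulty to be obtaining the Gauss-equation curvature components with correct signs in the pseudo-orthonormal frame, and ruling out $a=0$ cleanly.
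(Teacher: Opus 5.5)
Your Steps 1 and 3 are correct and follow the paper's proof. The paper gets $\mu=0$ from a Gauss-equation component with two $e_1$ slots, reads off $\varphi''=0$ from $\operatorname{Ric}(e_1)=-3(\varphi''/\varphi)e_1=3\lambda\mu\,e_1$, and obtains (ii) from the warped-product scalar curvature formula together with Lemma~\ref{lem:RicciOperator}(iv). Your identity $\mu\,S|_{e_1^\perp}=-(\varphi''/\varphi)\,\mathrm{Id}$ gives $\mu=0$ and $\varphi''=0$ at once. The off-diagonal entry you use is the informative one; the component $R(e_3,e_1,e_1,e_3)$ quoted in the paper vanishes on both sides because $e_3$ is null.

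The gap is Step 2, and it is circular. Proposition~\ref{prop:leaves} cannot exclude $a=0$, because its proof simply sets $1/r=\varphi'(x_0)/\varphi(x_0)$ and so presupposes $\varphi'(x_0)\neq0$. If $\varphi'\equiv0$, the same computation shows that $e_1$ is constant along the slice, so the slice lies in a hyperplane $\mathbb E^4_1$, with no contradiction. Nor does ``isoparametric with $\mu=0$'' contradict Case~(iii). The isoparametric obstruction the paper uses in Lemma~\ref{lem:intrinsic} (via Magid) only yields $\lambda\mu=0$, which holds here.

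In fact $a=0$ really occurs, so no argument can rule it out. In $\mathbb E^4_1$ let $L(w)=(1,\cos w,\sin w,0)$, $E=(0,0,0,1)$, $F=uL(w)+wE$, and
\[
\hat\psi(w,u,\sigma)=\sigma L(w)+\Bigl(w+\frac{u}{\sqrt{1+u^2}}\Bigr)E-\frac{L'(w)}{\sqrt{1+u^2}}.
\]
This is an immersion, since $\hat\psi_w$ has a non-zero $L''$-component. Put $\nu=F-\hat\psi$. Then $\langle\nu,\nu\rangle=1$, and $\nu$ is orthogonal to $L$ and to $uL'+E$, which span the tangent planes of $F$. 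Hence $\langle\nu,d\hat\psi\rangle=\langle\nu,dF\rangle-\tfrac12 d\langle\nu,\nu\rangle=0$, so $\nu$ is a unit normal and $\hat S=\mathrm{Id}-dF$. We have $dF(\partial_\sigma)=0$, $dF(\partial_u)=\hat\psi_\sigma$ and $dF(\partial_w)=(1+u^2)^{3/2}\hat\psi_u$. Therefore $\hat S$ has the single eigenvalue $1$ with a $3\times3$ Jordan block.

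Now take the cylinder $\Psi(x,w,u,\sigma)=(\hat\psi(w,u,\sigma),x)$ in $\mathbb E^4_1\times\mathbb R=\mathbb E^5_1$. It satisfies (A1)--(A4) with $\mu=0$, $\lambda=1$ and $e_1=\partial_x$ parallel. It is thus a warped product as in Proposition~\ref{prop:warped}, with constant $\varphi$.

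The paper's own proof passes from $\varphi''=0$ straight to (iii) and does not address this point either. To get (iii) as stated, you must assume the warped product is non-trivial ($\varphi$ non-constant); then $a\neq0$ is immediate from $\varphi=ax+b$. Otherwise the conclusion should read: $\varphi=ax$ with $a\neq0$, or $\varphi$ is constant.
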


\begin{proof}
Let $\{e_1,e_2,e_3,e_4\}$ be as in Proposition~\ref{prop:adapted-frame} with \eqref{eq:shape0}. A direct computation shows that the metric of $M$ in the coordinates $(x,u,y,v)$ is
\begin{equation}\label{eq:metric:adapted}
g= dx^2+\varphi(x)^2 \hat g,\qquad
\hat g=-\frac{2}{\rho_1\rho_4} du dv
+\frac{2\rho_2}{\rho_1\rho_4^2} dv^2
+\frac{1}{\rho_5^2}\Big(dy-\frac{\rho_3}{\rho_4} dv\Big)^2,
\end{equation}
where $\hat g$ is the metric of $N$. Moreover, $e_1$ is an eigenvector of the Ricci operator of $M$ with
\begin{equation}\label{eq:Ric:e1}
\operatorname{Ric}(e_1)=-\frac{3\varphi''}{\varphi} e_1,
\end{equation}
and the scalar curvature of $M$ is
\begin{equation}\label{eq:scalar:adapted}
\operatorname{Scal}=\frac{\widehat{\operatorname{Scal}}}{\varphi^2}-\frac{6\big(\varphi\varphi''+\varphi'^2\big)}{\varphi^2},
\end{equation}
where $\widehat{\operatorname{Scal}}$ denotes the scalar curvature of $(N^3_1,\hat g)$. By combining \eqref{eq:scalar:adapted} with (iv) of Lemma \ref{lem:RicciOperator}, we obtain that the scalar curvature of  $N$ is constant. Thus, we have (ii) of the proposition. 

On the other hand, by combining \eqref{eq:gauss}  for $X=e_3, Y=Z=e_1,W=e_3$ with \eqref{eq:second-fundamental-form}, \eqref{eq01:Warped} and \eqref{eq02c:Warped}, we get (i) of the proposition. So, (iii) of Lemma \ref{lem:RicciOperator} implies $\operatorname{Ric}(e_1)=0$. Therefore, \eqref{eq:Ric:e1} implies $\varphi''=0$. Therefore, we have (iii) of the proposition. 
\end{proof}

Now, we are to prove the main result of this section.
\begin{theorem}\label{thm:warped-classification}
Let $M$ be a hypersurface of $\mathbb E^5_1$ satisfying the assumptions given in Remark~\ref{rem:assumptions}, except possibly (A4). $M$ is locally a warped product $I\times_\varphi N^3_1$  with $e_1$ is tangent to $I$, if and only if it is locally congruent to the hypersurface constructed in Example~\ref{ex:Warped} for a $\lambda_0\in\mathbb R$ and $\alpha(x)=(0,x),\ x\neq 0$.
\end{theorem}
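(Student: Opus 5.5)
The argument splits into the two implications, and both rest on the results of this section.

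For the ``if'' direction, take Example~\ref{ex:Warped} with $\alpha(x)=(\phi,\varphi)=(0,x)$, $x\neq0$, and check the hypotheses of Proposition~\ref{prop:warpedproduct1:ex:Warped}.
\begin{itemize}
\item Condition \eqref{eq1:prop:warpedproduct1:ex:Warped} becomes $x\neq0$.
\item Condition \eqref{eq2:prop:warpedproduct1:ex:Warped} becomes $\varphi\varphi'=x\neq0$.
\item $\alpha$ lies on a line through the origin.
\end{itemize}
By parts (i)–(iii), $\psi$ is then an isometric immersion, $g$ is a warped product, and $S$ has the form of Case (iii). Part (iv) gives $e_1=\partial_x$, $\mu=\kappa=0$ and $\lambda=-\lambda_0/x$. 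Hence $\lambda-\mu=-\lambda_0/x$, which is nowhere zero provided $\lambda_0\neq0$; I will need to handle $\lambda_0=0$ by noting that (A2) then fails, so this case is excluded. The relation (A3) holds trivially, since $\mu\equiv0$ and $\lambda$ is an arbitrary function of $x$; in fact (A3) can only be read as $\mu=$ const, i.e. the roles must be matched appropriately. I will remark that in the warped case the constancy of $\mu$ is automatic and (A3) is not restrictive. Since $\psi=x\,\hat\psi$ is a cone, the metric is $dx^2+x^2\hat g$, which is the warped product with $e_1=\partial_x$ tangent to $I$.

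For the ``only if'' direction, suppose $M$ is locally $I\times_\varphi N^3_1$ with $e_1$ tangent to $I$. By the Remark after Proposition~\ref{prop:warped}, (A4) holds automatically, so Propositions~\ref{prop:warped}–\ref{prop:warpedproduct1} apply. Proposition~\ref{prop:warpedproduct1} gives $\mu=0$ and, after translating $x$, $\varphi=ax$. Combining \eqref{eq:prop:warped} with $\varphi'/\varphi=1/x$ gives $\lambda'=-\lambda/x$, so $\lambda=-\lambda_0/x$ for a constant $\lambda_0$. Next I would rescale the coordinate on $N$ (replace $\hat g$ by $a^2\hat g$) to obtain $\varphi(x)=x$. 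Proposition~\ref{prop:leaves} at $x_0$ then gives a center $c$ and radius $r=\varphi/\varphi'=x_0$. It also shows the slice is an isoparametric hypersurface of $\mathbb S^4_1(1/x_0^2,c)$ whose shape operator has the form \eqref{eq:shape:hatM}.

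The remaining step, which I expect to be the main obstacle, is to show that $c$ is independent of $x_0$ and that all slices are homothetic copies of one slice. The plan is to use that $e_1$ is a geodesic of $M$ that is also a straight line in $\mathbb E^5_1$. From \eqref{eq:second-fundamental-form}, $h(e_1,e_1)=\mu=0$ and $\nabla_{e_1}e_1=0$ give $\bar\nabla_{e_1}e_1=0$, so the $x$-curves are straight lines. Next I would show $\psi-x\,e_1$ is constant along $M$. Its $e_1$-derivative is $e_1-e_1-x\bar\nabla_{e_1}e_1=0$. For $a=2,3,4$, its $e_a$-derivative is $e_a-x\bar\nabla_{e_a}e_1=e_a-x(\nabla_{e_a}e_1+h(e_a,e_1))$. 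By \eqref{eq01:Warped}, $\nabla_{e_a}e_1=(1/x)e_a$, and $h(e_a,e_1)=0$, so this derivative vanishes. Translating so the constant is $0$ gives $\psi=x\,e_1$, i.e. $\psi(x,q)=x\,\hat\psi(q)$ with $\hat\psi=e_1|_{x=1}$. Here $\hat\psi$ maps into the unit de Sitter space $\mathbb S^4_1$, since $\langle e_1,e_1\rangle=1$. It is the slice at $x_0=1$, so by Proposition~\ref{prop:leaves} it is isoparametric with shape operator \eqref{eq:shape:hatM}. This is exactly \eqref{ex:Warped:PosVect} with $\phi=0$, $\varphi=x$. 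Finally, matching the sign of the normal $\xi$ with $\hat\xi$ fixes $\lambda_0$ consistently with the formula in Proposition~\ref{prop:warpedproduct1:ex:Warped}(iv). Congruence then holds up to a translation of $\mathbb E^5_1$, which completes the proof.
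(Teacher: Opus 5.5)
Your skeleton is the paper's. The ``if'' part is Proposition~\ref{prop:warpedproduct1:ex:Warped}(ii),(iv) for $\alpha=(0,x)$. The ``only if'' part feeds $\mu=0$ and $\varphi=ax$ from Proposition~\ref{prop:warpedproduct1}, together with \eqref{eq01:Warped}, into an integration of the position vector. Your integration, showing that $\psi-xe_1$ is constant, is tidier than the paper's. The paper integrates $\psi_{xx}=0$, $\psi_{xy}=\psi_y/x,\dots$ and then needs Proposition~\ref{prop:leaves} and the orthogonality of $\partial_x$ to the slices to kill the centre $c$. Yours gives the vertex at the origin and $\langle\hat\psi,\hat\psi\rangle=1$ at once; use the slice at any $x_0\in I$ if $1\notin I$.

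The genuine error is your treatment of (A3). It says $\lambda=f(\mu)$, so $\mu\equiv0$ forces $\lambda\equiv f(0)$ to be constant, whereas the example has $\lambda=-\lambda_0/x$. Hence the example violates (A3) when $\lambda_0\neq0$ and (A2) when $\lambda_0=0$. No member of the family satisfies the standing assumptions, so (A3) is far from ``not restrictive''. Swapping the roles to $\mu=g(\lambda)$ does not help either. The proof of Lemma~\ref{lem:ConnForms} uses $\lambda=f(\mu)$ to get $e_a(\lambda)=0$ from $e_a(\mu)=0$, and everything from Proposition~\ref{prop:warped} on rests on that lemma. In the ``if'' direction the slip is harmless, since (A1)--(A3) are hypotheses on $M$ and need not be verified for the example.

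In the ``only if'' direction it is not harmless. Once Proposition~\ref{prop:warpedproduct1} gives $\mu=0$, (A3) makes $\lambda$ constant, and \eqref{eq:prop:warped} then forces $\varphi'=0$. Equivalently, your own relation $\lambda=-\lambda_0/x$ together with (A3) gives $\lambda_0=0$, contradicting (A2). So the case $\varphi=ax$ with $a\neq0$ never occurs under the hypotheses. You treat only that case, as does the paper, which takes $a\neq0$ in Proposition~\ref{prop:warpedproduct1}(iii) for granted.

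The only configuration that can occur, $\varphi$ constant, is not covered by your argument. There $\nabla_{e_a}e_1=0$, so $e_a(\psi-xe_1)=e_a\neq0$. Instead $\bar\nabla e_1=0$, and $M$ is locally a cylinder $\mathbb R\times\hat M^3$. Here $\hat M^3$ is an isoparametric hypersurface of a hyperplane $\mathbb E^4_1$ with shape operator of the form \eqref{eq:shape:hatM} and $\lambda_0\neq0$. Such a cylinder is not congruent to any cone from Example~\ref{ex:Warped}. To finish, you need a separate argument excluding this isoparametric cylinder, for instance via Magid's classification of isoparametric Lorentzian hypersurfaces; the equivalence then holds vacuously. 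The consequence $\lambda\mu=0$ quoted in Lemma~\ref{lem:intrinsic} does not suffice, since here $\mu=0$.
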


\begin{proof}
In order to prove the necessary condition, we assume that $M$ is locally a warped product $I\times_\varphi N^3_1$. Let $\psi$ be the position vector of $M$. By combining \eqref{eq01:Warped} and \eqref{eq:second-fundamental-form} with Proposition~\ref{prop:adapted-frame} and Proposition~\ref{prop:warpedproduct1}, we obtain
\begin{equation*}
\psi_{xx}=0,\qquad \psi_{xy}=\frac{1}{x}\psi_{y},\qquad \psi_{xu}=\frac{1}{x}\psi_{u},\qquad \psi_{xv}=\frac{1}{x} \psi_{v}.
\end{equation*}
which yields that, up to a suitable translation,
\begin{equation}\label{eq1:thm:warped-classification}
\psi(x,y,u,v)=x\psi_1(y,u,v)
\end{equation}
for a $\bar\psi$. Since the slice  $\psi(x_0,y,u,v)=x_0\bar\psi(y,u,v)$ lays on $\mathbb S^4_1(1/r^2,c)$ for some $r>0$ and $c\in\mathbb E^5_1$ because of Proposition~\ref{prop:leaves},  by re-defining $x$ and $c$ properly, we observe that \eqref{eq1:thm:warped-classification} turns into
\begin{equation}\label{eq2:thm:warped-classification}
\psi(x,y,u,v)=xc+x\hat\psi(y,u,v), \qquad \langle\hat\psi,\hat\psi\rangle=1.
\end{equation}
Note that because of \eqref{eq:prop:adapted-frame} in Proposition~\ref{prop:adapted-frame}, we have $\langle\partial_x,\partial_y\rangle=\langle\partial_x,\partial_u\rangle=\langle\partial_x,\partial_v\rangle=0$. Therefore, \eqref{eq2:thm:warped-classification} implies $c=0$. Moreover, Proposition~\ref{prop:leaves} yields that $\bar\psi$ is the position vector of an isoparametric hypersurface of $\mathbb S^4_1(1/r^2,0)$ whose shape operator $\hat S$ takes the form \eqref{eq:shape:hatM} for a $\lambda_0$. Hence, $M$ is locally congruent to the hypersurface constructed in Example~\ref{ex:Warped} for  $\alpha(x)=(0,x)$. (i) and (iii) of Proposition~\ref{prop:warpedproduct1:ex:Warped} yields  $x\neq0$.

The converse follows from  Proposition~\ref{prop:warpedproduct1:ex:Warped}.
\end{proof}

%%%%%%%%%%%%%%%%%%%%%%
%%%%%%%%%%%%%%%%%%%%%%
%%%%%%%%%%%%%%%%%%%%%%
%%%%%%%%%%%%%%%%%%%%%%
%%%%%%%%%%%%%%%%%%%%%%

\section*{Acknowledgements}
During the preparation of this work the authors used Claude (Anthropic) in order to improve the language and readability of the manuscript. \\
Zülal Derin Yaqub gratefully acknowledges the support provided by the Scientific and Technological Research Council of Türkiye, TÜBİTAK-2218 – National Postdoctoral Research Fellowship Program, under project number 123C433.

\section*{Declarations}

\begin{itemize}
\item \textbf{Funding:} Z\"ulal Derin Yaqub is supported by T\"UB\.ITAK under the 2218 -- National Postdoctoral Research Fellowship Program (project number 123C433), hosted by Istanbul Technical University.
\item \textbf{Conflict of interest:} The authors declare no competing interests.
\item \textbf{Ethics approval and consent to participate:} Not applicable.
\item \textbf{Consent for publication:} Not applicable.
\item \textbf{Data availability:} Data sharing is not applicable to this article as no datasets were generated or analysed during the current study.
\item \textbf{Materials availability:} Not applicable.
\item \textbf{Code availability:} Not applicable.
\item \textbf{Author contribution:} All authors contributed to the study conception, the analysis and the writing of the manuscript. All authors read and approved the final manuscript.
\end{itemize}
\noindent

\end{document}